\definecolor{lightgray}{gray}{0.95}
\newtheorem{theorem}{Theorem}
\newtheorem{lemma}[theorem]{Lemma}
\newtheorem{definition}{Definition}
\newtheorem{corollary}[theorem]{Corollary}
\newtheorem{proposition}[theorem]{Proposition}
\newtheorem*{remark}{Remark}
\newtheorem{algorithm}{Algorithm}
\newcommand{\R}{\mbox{${I\!\!R}$}}
\newcommand{{\coes}}{\c c\~oes }
\newcommand{\be}{\begin{equation}}
\newcommand{\ee}{\end{equation}}
\newcommand{\beqn}{\begin{eqnarray}}        
\newcommand{\eeqn}{\end{eqnarray}}
\newcommand{\la}{\langle}
\newcommand{\ra}{\rangle}
\DeclareMathOperator{\grad}{grad}
\begin{document}
\title{An inexact proximal point method for variational \\ inequality  on Hadamard manifolds}
\author{
G. C. Bento   \thanks{Instituto de Matem\'atica e Estat\'istica, Universidade Federal de Goi\'as,  CEP 74001-970 - Goi\^ania, GO, Brazil, E-mails: {\tt  glaydston@ufg.br},  {\tt  orizon@ufg.br}. The authors was supported in part by  CNPq grants 423737/2016-3, 310864/2017-8, 305158/2014-7, 408151/2016-1 and 302473/2017-3,  FAPEG/PRONEM- 201710267000532.}
\and
O.  P. Ferreira  \footnotemark[1]
\and
E.A. Papa Quiroz     \thanks{Universidad Nacional Mayor de San Marcos and Universidad Privada del Norte, Lima, Peru,  {\tt   erikpapa@gmail.com}}
}
\maketitle

\begin{abstract}
In this paper we present an inexact proximal  point method for variational inequality problem  on Hadamard manifolds and study its convergence properties.   The proposed  algorithm is inexact in two sense. First, each  proximal subproblem is  approximated  by using the  enlargement of the  vector  field in consideration and then  the next iterated is obtained by solving this subproblem allowing a suitable  error tolerance. As an application, we obtain an inexact proximal  point method for constrained optimization problems, equilibrium problems  and nonlinear optimization problems on Hadamard manifolds.\\

Keywords: Inexact proximal method, equilibrium problem,  optimization problem, Hadamard manifold 
\end{abstract}
\section{Introduction}
Extensions of concepts and techniques of optimization from the Euclidean  space to the Riemannian context have been a subject of intense research in recent years. An special attention  has been given to methods of  Riemannian mathematical programming; papers published on  this topic involving proximal point methods include, but are not limited to, \cite{AhmadiKhatibzadeh2014, BentoNetoOliveira2016, Bacak2013, BentoFerreira2015, LiYao2012, PapaQuirozOliveira2012, TangHuang2013, WangLiLopezYao2015,WangLiChongGenaro2016}.  It is well known that  one of the reasons for this extension is the possibility of transforming non-convex or non-monotone problems in the Euclidean context into  Riemannian convex or monotone  problems, by introducing a suitable metric, which  enables modified  numerical  methods to find  solutions for these problems;  see \cite{BentoFerreira2015, BentoMelo2012,  FerreiraLouzeiroPrudente2019, CLMM2012, FerreiraCPN2006, Rapcsak1997}. Moreover,  constrained optimization problems can be viewed as unconstrained ones from a Riemannian geometry point of view. In particular, many Euclidean optimization problems are naturally posed on the Riemannian context;  see, e.g., \cite{AdlerDedieuShub2002, EdelmanAriasSmith1999, JeurisVandebrilVandereycken2012,  Karmarkar1998,  Luenberger1972,  NesterovTodd2002,  Smith1994, SraHosseini2015,  Udriste1994,  ZhangReddiSra2016, Xu2019}.

In this paper, we consider the problem of  finding a  solution of a variational inequality  problem defined on  Riemannian manifolds. Variational inequality on Riemannian manifolds were first introduced and studied  by  N\'emeth in \cite{Nemeth2003},  for univalued vector fields on Hadamard manifolds,   and  for multivalued  vector fields on general Riemannian manifolds  by Li and Yao in   \cite{LiYao2012};  for recent works addressing this subject  see \cite{FangChen2015, LiChongLiouYao2009, TangWangLiu2015, TangZhouHuang2013}. It is worth noting that  constrained optimization problems  and the problem of finding the zero of a  multivalued vector field on Riemannian manifolds,  which were  studied  in \cite{AhmadiKhatibzadeh2014, BentoFerreira2015, daCruzFerreiraPerez2006,  FerreiraOliveira2002, LiLopesMartin-Marquez2009, WangLiLopezYao2015},  are  particular instances of the variational inequality problem. 

The aim  of this   paper is to  introduce  an inexact proximal  point method for variational inequality problem  in Hadamard manifolds and to study its convergence properties. The proposed  algorithm combine ideas from the papers \cite{Batista2015} and \cite{WangLiLopezYao2015} to obtain in inexact algorithm  in two sense.  First, each  proximal subproblem is  approximated  by using the  enlargement of the  vector  field in consideration and then the next iterated is obtained by solving this subproblem allowing a suitable  error tolerance.  
This algorithm has as particular instances  some  algorithms  previously  studied.  For instance, it generalize the algorithm studied in  \cite{rocka01} to Riemannian setting,  by considering two of the four errors considered there.   Considering that the Riemannian algorithm studied in \cite{WangLiLopezYao2015, WangLiChongGenaro2016}  does not use the enlargement of the vector field in consideration, then in this sense our algorithm has it as particular instance. Moreover,  our algorithm also  merges into  algorithms studied  in  \cite{BatistaGlaydstonFerreira2016,TangHuang2013}.  It is worth highlighting that the use of enlargement  in the proximal subproblem to define the next iteration of the algorithm has the advantage of providing more latitude and more robustness to the algorithm, as explained in \cite{BurachikIusemSvaiter1997}. The concept of enlargement of monotone operators  in linear spaces has been successfully employed for a wide range of purposes; see \cite{BurachikIusem2008} and its reference therein.   The extension of this concept to Riemannian context  has been presented in \cite{BatistaGlaydstonFerreira2016}. 
As an application, from the our iterative scheme we obtain an inexact proximal  point method for constrained optimization problems, equilibrium problems  and nonlinear optimization problems on Hadamard manifolds.  To the best of our knowledge, our approach brings a first proposal of an inexact proximal method for equilibrium problems on Hadamard manifolds.

It is important to note that an exact version was first introduced in \cite{CLMM2012} and, by using the theory of variational inequality,  has been reaffirmed for genuine  Hadamard manifolds in  \cite{Li2019}.

The  organization of the paper is as follows. In Section~\ref{sec:prelim},   notations   basic results used  thought  the paper are presented. In Section \ref{Sec:InProxVIP}, the inexact proximal point method for variational inequalities is presented  and its convergence properties are studied. As an application, in Section~\ref{sec:appl}, an inexact proximal point method for  constrained optimization problems, equilibrium problems  and nonlinear optimization problems  are obtained. In Section~\ref{secfr} concluding remarks are presented.

\section{Preliminaries}  \label{sec:prelim}
The aim of the section is to recall  some fundamental  properties and notations  of  Riemannian geometry, as well as  the notions of   monotonicity and   maximal  monotonicity and  enlargement of  multivalued  vector  fields  on Hadamard manifolds;   for more details see  \cite{Batista2015}.

\subsection{Notation and terminology} \label{sec:aux}
 {\it In this paper, all manifolds $M$ are assumed to be Hadamard finite dimensional}. Next we recall  a fundamental inequality  of Hadamard manifolds  that we will  need 
 \begin{equation} \label{eq:coslaw}
d^2(p_1,p_3)+d^2(p_3,p_2)-2\left\langle \exp_{p_3}^{-1}p_1,\exp_{p_3}^{-1}p_2\right\rangle\leq d^2(p_1,p_2),  \qquad p_1, p_2 , p_3 \in M, 
\end{equation}
 where  $\exp_{p}(\cdot) $ denotes the  {\it exponential map}, $\exp^{-1}_{q}(\cdot)$ its  inverse and $d(\cdot, \cdot)$ is the Riemannian distance. The function $f:M\to\mathbb{R}\cup\{+\infty\}$ is said to be proper if $\mbox{dom}f:=\{p\in M:~f(p)<+\infty\}\neq \varnothing$ and it is {\it convex} on a convex set $\Omega\subset \mbox{dom}\,~f$ if for any geodesic segment $\gamma$ in $\Omega$, the composition $f\circ\gamma$ is convex. It is well known that  $d^2(q, \cdot)$ is convex. The  {\it subdifferential\/} of $f$ at $p$ de defined by  $ \partial f(p)=\{f(q) \geq f(p) + \langle s, \, \exp^{-1}_pq \rangle, ~q\in M\}$.  The function  $f$ is {\it lower semicontinuous} at $\bar{p}\in\mbox{dom}f$ if for each sequence $\{p^k\}$ converging to $\bar{p}$, we have $\liminf_{k\rightarrow\infty} f(p^k)\geq f(\bar{p}).$
 Denotes by  $X: M \rightrightarrows  TM$ with $X(p)\subset T_{p}M$ a multivalued vector field  and by
$\mbox{dom}X:=\left\{ p\in M ~: ~X(p)\neq \varnothing \right\},$ its    domain.  We say that  $X$ is {\it bounded on bounded sets }  if for all bounded set $V\subset M$  such that its closure $\overline{V} \subset  \mbox{int}( \mbox{dom}\,X)$ it holds that    $m_X(V):=\sup_{q\in V}\left\{\|u\|~:~u\in X(q)\right\}< +\infty$; see an equivalent definition in \cite{LiLopesMartin-Marquez2009}.  For two  multivalued vector fields  $X, Y$ on $M$, the notation $ X\subset Y$ implies that $X(p)\subset Y(p)$, for all $p\in M$.  Denotes by {\it  $P_{pq}$ the parallel transport along the geodesic  from $p$ to $q$}. A multivalued vector field $X$ satisfying  $\langle P_{qp}^{-1} u-v, \, \exp_{q}^{-1}p\rangle \geq 0$ and $\langle P_{qp}^{-1} u-v, \,\exp_{q}^{-1}p\rangle \geq \rho d^2(p,q)$,  for some $\rho>0$  and all $p,\,q\in \mbox{dom}\,X$ and $u\in X(p), ~ v\in X(q)$,  is said to be  {\it monotone}, respectively,   {\it strongly monotone}.  Moreover,  a monotone vector field $X$  is said to be  {\it maximal monotone\/}, if for each $p\in \mbox{dom}\,X$ and  all $u\in T_pM$, there holds: $$\langle P_{qp}^{-1} u-v, \,\exp_{q}^{-1}p\rangle \geq 0,  q\in \mbox{dom}\,X, \quad ~ v\in X(q)  ~ \Rightarrow ~  u\in X(p).$$
 For more details about monotonicity  of vector field; see  \cite{Nemeth1999, NetoFerreiraLucambio2000, LiLopesMartin-Marquez2009}. The proof of the  next result   can be found in   \cite[Theorem~5.1]{LiLopesMartin-Marquez2009}.
\begin{theorem}\label{mmsub}
 Let $f$ be a proper, lower semicontinuous and convex function on $M$. The subdifferential $\partial f$ is a monotone multivalued vector field. Furthermore, if $\emph{dom}\,f=M$, then the subdifferential $\partial f$ of $f$ is a maximal monotone vector field.
\end{theorem}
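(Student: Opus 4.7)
My plan for the monotonicity of $\partial f$ is the standard one. For $p,q\in\mathrm{dom}\,\partial f$, $u\in\partial f(p)$, $v\in\partial f(q)$, adding the two subgradient inequalities produces $\langle u,\exp_p^{-1}q\rangle+\langle v,\exp_q^{-1}p\rangle\le0$. In a Hadamard manifold the unique minimizing geodesic $\gamma\colon[0,1]\to M$ joining $p$ to $q$ has $\gamma'(0)=\exp_p^{-1}q$ and $\gamma'(1)=-\exp_q^{-1}p$, and parallel transport along a geodesic preserves its velocity, so $P_{qp}^{-1}(\exp_p^{-1}q)=-\exp_q^{-1}p$. Using this together with the isometry property of parallel transport rewrites the above as $\langle u,\exp_p^{-1}q\rangle=-\langle P_{qp}^{-1}u,\exp_q^{-1}p\rangle$, which turns the displayed inequality into the desired monotonicity relation $\langle P_{qp}^{-1}u-v,\exp_q^{-1}p\rangle\ge 0$.

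For the maximality statement I would adapt Minty's resolvent trick to the Riemannian setting. Note first that, since $\mathrm{dom}\,f=M$, the convex function $f$ is continuous. Fix $p\in M$ and $u\in T_pM$ satisfying the Minty-type inequality in the statement, and for each $t>0$ set $q_t:=\exp_p(tu)$ and consider the proximal subproblem
\[
\min_{q\in M}\,\Bigl(f(q)+\tfrac{1}{2t}\,d^2(q,q_t)\Bigr).
\]
Because $f$ is proper lsc convex and $d^2(q_t,\cdot)$ is strongly convex, continuous and coercive on the Hadamard manifold, the sum admits a unique minimizer $\tilde q_t$. Using the standard fact that $\partial\bigl[\tfrac12 d^2(q_t,\cdot)\bigr](\tilde q_t)=\{-\exp_{\tilde q_t}^{-1}q_t\}$, the first-order optimality condition reads $v_t:=\tfrac{1}{t}\exp_{\tilde q_t}^{-1}q_t\in\partial f(\tilde q_t)$.

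Applying the Minty hypothesis with $(q,v)=(\tilde q_t,v_t)$, Cauchy--Schwarz in the form $\langle P_{\tilde q_tp}^{-1}u,\exp_{\tilde q_t}^{-1}p\rangle\le\|u\|\,d(p,\tilde q_t)$, and the cosine law \eqref{eq:coslaw} with $(p_1,p_2,p_3)=(q_t,p,\tilde q_t)$, together with $d(p,q_t)=t\|u\|$ and the triangle inequality, I would obtain after a short algebraic manipulation the quantitative bound $d(p,\tilde q_t)\le 2t\|u\|$; in particular $\tilde q_t\to p$ as $t\downarrow 0$ and $\|v_t\|$ stays bounded. Finally, a first-order expansion of $\exp^{-1}$ and of parallel transport in normal coordinates at $p$ gives
\[
P_{\tilde q_tp}v_t\;=\;\tfrac{1}{t}\,P_{\tilde q_tp}\exp_{\tilde q_t}^{-1}q_t\;\lri\;u \quad\text{in }T_pM,
\]
and passing to the limit in the subgradient inequality $f(q')\ge f(\tilde q_t)+\langle v_t,\exp_{\tilde q_t}^{-1}q'\rangle$ (using continuity of $f$ and the isometry of $P$) produces $f(q')\ge f(p)+\langle u,\exp_p^{-1}q'\rangle$ for every $q'\in M$, i.e.\ $u\in\partial f(p)$. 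The main obstacle I foresee is making this last limit rigorous: the $o(t)$ expansions of $\exp^{-1}$ and of parallel transport near $p$, together with the subsequence/uniqueness argument needed to identify the limit as $u$ itself, are standard but require careful Jacobi-field estimates adapted to the Hadamard setting.
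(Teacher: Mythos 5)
The paper does not prove this theorem itself; it cites \cite[Theorem~5.1]{LiLopesMartin-Marquez2009}, whose maximality argument is precisely the Minty/resolvent device you are attempting. Your monotonicity half is correct and standard: adding the two subgradient inequalities and using $P_{qp}^{-1}\exp_p^{-1}q=-\exp_q^{-1}p$ together with the isometry of parallel transport gives exactly the required relation.

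The maximality half, however, has a genuine gap at the final limit, and it is the step you yourself flag. From the estimate $d(p,\tilde q_t)\le 2t\|u\|$ you cannot conclude that $P_{\tilde q_t p}v_t\to u$: writing things in normal coordinates at $p$, one has $v_t=\tfrac1t\exp_{\tilde q_t}^{-1}q_t\approx u-\tfrac1t x_t$ where $x_t$ are the coordinates of $\tilde q_t$, and your bound only gives $\|x_t/t\|\le 2\|u\|$. So a cluster point $w$ of $P_{\tilde q_t p}v_t$ yields $w\in\partial f(p)$ for \emph{some} $w$ with $\|w-u\|\lesssim 2\|u\|$, not $u\in\partial f(p)$; and the Minty hypothesis tested at $q=p$ reads $\langle u-w,0\rangle\ge0$, which is vacuous and cannot identify $w$ with $u$. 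The repair is to avoid the limit entirely: besides applying \eqref{eq:coslaw} with $(p_1,p_2,p_3)=(q_t,p,\tilde q_t)$ to bound $\langle v_t,\exp_{\tilde q_t}^{-1}p\rangle$ from below, do \emph{not} use Cauchy--Schwarz on the other term, but note that $\langle P_{\tilde q_t p}^{-1}u,\exp_{\tilde q_t}^{-1}p\rangle=-\tfrac1t\langle\exp_p^{-1}q_t,\exp_p^{-1}\tilde q_t\rangle$ and apply \eqref{eq:coslaw} again with base point $p$. Summing the two resulting inequalities inside the Minty condition, all terms cancel except $0\le -\tfrac1t\,d^2(p,\tilde q_t)$, whence $\tilde q_t=p$ \emph{exactly} and $v_t=\tfrac1t\exp_p^{-1}q_t=u\in\partial f(p)$. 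In particular $t=1$ suffices, no asymptotics, no Jacobi-field estimates, and no identification-of-the-limit issue; this is essentially the argument of the cited reference.
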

 Let $\Omega\subset\mathbb{R}^n$ be a convex set, and $p\in \Omega$. From \cite{LiLopesMartin-Marquez2009}, we define the {\it normal cone} to $\Omega$ at $p$ by
\begin{equation} \label{eq:nc}
N_{\Omega}(p):=\left\{w\in T_pM~:~\left\langle w, \exp_{p}^{-1}q \right\rangle\leq 0, ~q\in \Omega \right\}.
\end{equation}
The   {\it  indicator function}  $\delta_{\Omega}:M\to\mathbb{R}\cup\{+\infty\}$ of  the set $\Omega $  is defined by  $\delta_{\Omega}(p)=0$, for  $p\in\Omega$ and $\delta_{\Omega}(p)=+\infty$ otherwise. The next result can be found in \cite[Proposition 5.4]{LiLopesMartin-Marquez2009}.
\begin{proposition}  \label{pr:pif}
Let  $\Omega\subset M$ be a closed and convex set and $f:M\to\mathbb{R}$ be a convex function. Then, $\partial \delta_{\Omega}(p)=N_{\Omega}(p)$ and  $ \partial (f+\delta_{\Omega})(p)= \partial f(p)+ N_{\Omega}(p)$,  for all $p\in \Omega$. 
\end{proposition}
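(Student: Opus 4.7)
The plan is to establish the two identities in succession, the second being the more delicate.

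For the first identity $\partial \delta_{\Omega}(p) = N_{\Omega}(p)$, I would argue by double inclusion directly from the definition of the subdifferential. If $s \in \partial \delta_{\Omega}(p)$, then $\delta_{\Omega}(q) \geq \delta_{\Omega}(p) + \langle s, \exp_{p}^{-1}q\rangle$ for every $q\in M$. Since $p\in\Omega$ we have $\delta_{\Omega}(p)=0$; the inequality is trivial whenever $q\notin\Omega$ (the left-hand side being $+\infty$), so the condition reduces to $\langle s, \exp_{p}^{-1}q\rangle \leq 0$ for all $q\in\Omega$, which is exactly the defining condition \eqref{eq:nc} of $N_{\Omega}(p)$. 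The reverse inclusion follows by retracing these steps.

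For the second identity, the inclusion $\partial f(p) + N_{\Omega}(p) \subset \partial(f+\delta_{\Omega})(p)$ is routine: given $s_{1}\in\partial f(p)$ and $s_{2}\in N_{\Omega}(p)=\partial \delta_{\Omega}(p)$, summing the two subgradient inequalities at $p$ evaluated at an arbitrary $q\in M$ yields precisely the inequality that certifies $s_{1}+s_{2}\in\partial(f+\delta_{\Omega})(p)$, once one observes that $\delta_{\Omega}$ is identically zero on $\Omega$ and that outside $\Omega$ the inequality holds trivially.

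The reverse inclusion is where I expect the main obstacle. It is the Riemannian counterpart of the Moreau--Rockafellar sum rule and in general requires a constraint qualification; here the assumption that $f$ is finite-valued on all of $M$ makes this qualification trivial since $\mbox{int}(\mbox{dom}\,f)\cap\Omega = \Omega \neq \varnothing$. Given $s\in\partial(f+\delta_{\Omega})(p)$, the idea is to apply a separation argument in the tangent space $T_{p}M$ between two disjoint convex sets built from the local data of $f$, of $\Omega$, and of $s$, and then transport the separating functional back to $M$. The fact that on a Hadamard manifold $\exp_{p}$ is a global diffeomorphism and that $d^{2}(q,\cdot)$ is convex are the two geometric tools that make this transfer possible, producing the decomposition $s = s_{1} + s_{2}$ with $s_{1}\in\partial f(p)$ and $s_{2}\in N_{\Omega}(p)$, which concludes the argument.
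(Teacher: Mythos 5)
Your proof of the first identity and of the inclusion $\partial f(p)+N_{\Omega}(p)\subset\partial(f+\delta_{\Omega})(p)$ is correct and complete. Note, however, that the paper does not prove this proposition at all: it is quoted from \cite[Proposition 5.4]{LiLopesMartin-Marquez2009}, so the only substantive comparison is with the argument given there, and the only substantive content of your attempt is the reverse inclusion.

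That reverse inclusion is exactly where your sketch has a genuine gap. You propose to separate ``two disjoint convex sets built from the local data'' inside $T_pM$ and to transport the separating functional back via $\exp_p$, citing as the enabling facts that $\exp_p$ is a global diffeomorphism and that $d^2(q,\cdot)$ is convex. But the natural candidates for those convex sets --- the epigraph of $f\circ\exp_p$ and the set $\exp_p^{-1}(\Omega)$ --- are in general \emph{not} convex subsets of $T_pM$: geodesic convexity of $f$ only makes $t\mapsto f(\exp_p(tv))$ convex along rays through the origin, and $\exp_p^{-1}$ does not carry geodesics of $M$ to line segments of $T_pM$ unless they pass through $p$. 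Neither of the two geometric facts you invoke repairs this, so the separation argument cannot be run as described. The route that actually works (and is the one behind the cited result) linearizes differently: one shows that the directional derivative $v\mapsto f'(p;v)$ is an everywhere finite sublinear function on $T_pM$, that the corresponding object for $f+\delta_{\Omega}$ is $f'(p;\cdot)$ plus the indicator of the (convex) tangent cone $T_{\Omega}(p)$ whose polar is $N_{\Omega}(p)$, and that for a convex $g$ the set $\partial g(p)$ coincides with the Euclidean subdifferential at $0\in T_pM$ of $g'(p;\cdot)$; the classical Moreau--Rockafellar sum rule then applies in the linear space $T_pM$ because $f'(p;\cdot)$ is finite, hence continuous. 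Identifying these as the convex objects to which the Euclidean theory applies is the missing idea; your remark that finiteness of $f$ supplies the constraint qualification is correct but is not the hard part.
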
 
 The proof of the next result follows from   \cite[ Corollary 3.14]{LiYao2012}. 
\begin{lemma} \label{le:esvi}
Let $X$ be a maximal monotone vector field such that $\emph{dom}\,X=M$.   For each $q\in M$ and $\lambda > 0$,  the inclusion problem
$
0\in X(p) +N_{\Omega}(p)-  \lambda  \exp^{-1}_p q, 
$
for $p\in M$, has an unique solution.
\end{lemma}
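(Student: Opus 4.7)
The plan is to split the argument into uniqueness, which follows from strong monotonicity of the prox-term $-\lambda\exp_{\cdot}^{-1}q$, and existence, which is the main obstacle and essentially coincides with \cite[Corollary~3.14]{LiYao2012}. As preparation, observe that by \eqref{eq:nc} the inclusion is equivalent to: find $p\in\Omega$ and $u\in X(p)$ such that
\begin{equation*}
\bigl\langle u-\lambda\exp_p^{-1}q,\ \exp_p^{-1}z\bigr\rangle\geq 0,\qquad \forall\,z\in\Omega,
\end{equation*}
i.e., a mixed variational inequality on $\Omega$ with coercive drift toward $q$.

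For uniqueness, suppose $p_1,p_2$ both solve the inclusion and pick selections $u_i\in X(p_i)$, $w_i\in N_\Omega(p_i)$ with $u_i+w_i=\lambda\exp_{p_i}^{-1}q$. By Proposition~\ref{pr:pif} and Theorem~\ref{mmsub}, both $X$ and $N_\Omega=\partial\delta_\Omega$ are monotone, so monotonicity of the sum yields
\begin{equation*}
\bigl\langle P_{p_2p_1}^{-1}(u_1+w_1)-(u_2+w_2),\ \exp_{p_2}^{-1}p_1\bigr\rangle\geq 0,
\end{equation*}
which, substituting the defining equations, rewrites as
\begin{equation*}
\lambda\bigl\langle P_{p_2p_1}^{-1}\exp_{p_1}^{-1}q-\exp_{p_2}^{-1}q,\ \exp_{p_2}^{-1}p_1\bigr\rangle\geq 0.
\end{equation*}
On the other hand, applying \eqref{eq:coslaw} twice (with the roles of $p_1$ and $p_2$ interchanged) and adding, one obtains the strong monotonicity estimate $\langle P_{p_2p_1}^{-1}\exp_{p_1}^{-1}q-\exp_{p_2}^{-1}q,\ \exp_{p_2}^{-1}p_1\rangle\leq -d^2(p_1,p_2)$ for the field $-\exp_{\cdot}^{-1}q$ (using that $P_{p_2p_1}\exp_{p_2}^{-1}p_1=-\exp_{p_1}^{-1}p_2$, since the tangent vector of a geodesic is parallel). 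Combining forces $d(p_1,p_2)=0$, hence $p_1=p_2$.

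Existence is the main obstacle, amounting to solving a maximal-monotone variational inequality with a coercive forcing term on a Hadamard manifold. The standard approach, carried out in \cite[Corollary~3.14]{LiYao2012} and which I would invoke here, is to truncate to $\Omega_r:=\Omega\cap\overline{B(q,r)}$ (geodesically convex and compact in the Hadamard setting), apply a Riemannian KKM/Hartman-Stampacchia-type lemma to the upper semi-continuous monotone field $X-\lambda\exp_{\cdot}^{-1}q$ restricted to $\Omega_r$ to produce a solution $p_r\in\Omega_r$ of the truncated inequality, and then use the coercivity of $\tfrac{\lambda}{2}d^2(\cdot,q)$ together with boundedness of $X$ on bounded sets to show that $d(p_r,q)$ stays uniformly bounded in $r$. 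For $r$ sufficiently large, $p_r$ lies in the relative interior of $\overline{B(q,r)}\cap\Omega$, so the truncated variational inequality coincides with the original one and $p_r$ is the desired solution.
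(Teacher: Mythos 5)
Your proposal is correct and takes essentially the same route as the paper, which proves this lemma purely by citing \cite[Corollary~3.14]{LiYao2012}; your existence argument is exactly the truncation/KKM scheme carried out there. The only addition is your self-contained uniqueness argument via the strong monotonicity of $p\mapsto -\exp_p^{-1}q$ derived from \eqref{eq:coslaw}, which checks out and is a useful elaboration, though the cited corollary already asserts uniqueness.
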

Since the exponential mapping is continuous in both arguments, the next proposition is an immediate consequence of definition \eqref{eq:nc}, for that its proof wil be omite. 
\begin{proposition} \label{eq:nccc}
Let   $C\subset M$  be a closed set.  If $\overline{p}=\lim_{k\rightarrow\infty}p^k$, $\overline{u}=\lim_{k\rightarrow\infty}u^k$, and $u^k\in N_{\Omega}(p^k)$ for all $k$, then $\overline{u}\in N_{\Omega}(\overline{p})$. 
\end{proposition}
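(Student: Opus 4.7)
The plan is to verify the membership $\overline{u}\in N_{\Omega}(\overline{p})$ directly from the definition \eqref{eq:nc}, that is, to show $\langle \overline{u},\exp_{\overline{p}}^{-1}q\rangle_{\overline{p}}\leq 0$ for every $q\in\Omega$. Fix such a $q$. By hypothesis we have, for each $k$,
\begin{equation*}
\langle u^k,\exp_{p^k}^{-1}q\rangle_{p^k}\leq 0,
\end{equation*}
so it suffices to pass to the limit on the left-hand side and conclude that the inequality is preserved.

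I would next argue that on a Hadamard manifold the map $(p,q)\mapsto \exp_{p}^{-1}q$ is continuous as a map into $TM$. This is a standard fact and follows from the Cartan--Hadamard theorem: for each $p$ the exponential map $\exp_p$ is a diffeomorphism from $T_pM$ onto $M$, and its inverse depends continuously on both arguments (indeed $\exp^{-1}_p q$ can be identified with $-\tfrac{1}{2}\grad_p d^2(p,q)/\text{(factor)}$ and $d^2$ is smooth). With $q$ fixed and $p^k\to\overline{p}$, this yields $\exp_{p^k}^{-1}q\to \exp_{\overline{p}}^{-1}q$ in the tangent bundle $TM$.

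Combining this with $u^k\to\overline{u}$ in $TM$, I would invoke the continuity of the Riemannian metric viewed as a map $TM\oplus TM\to\mathbb{R}$, $(v,w)\mapsto \langle v,w\rangle$, to conclude
\begin{equation*}
\langle u^k,\exp_{p^k}^{-1}q\rangle_{p^k}\longrightarrow \langle \overline{u},\exp_{\overline{p}}^{-1}q\rangle_{\overline{p}}.
\end{equation*}
Since each term of the sequence is $\leq 0$, the limit is $\leq 0$, which is exactly the desired inequality. As $q\in\Omega$ was arbitrary, this shows $\overline{u}\in N_{\Omega}(\overline{p})$.

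The only point requiring care, and what I expect to be the main obstacle in making the proof fully rigorous, is the interpretation of the convergence $u^k\to\overline{u}$, since the vectors $u^k$ live in different tangent spaces $T_{p^k}M$. The natural reading is convergence in the topology of $TM$ (equivalently, convergence after parallel transport $P_{p^k\overline{p}}u^k\to\overline{u}$ along the geodesics joining $p^k$ to $\overline{p}$, using that parallel transport is an isometry depending continuously on the endpoints on a Hadamard manifold). Under this interpretation the three continuity facts above combine cleanly; without it the statement would not even type-check. Once this interpretation is fixed, the proof is just continuity and limit of inequalities.
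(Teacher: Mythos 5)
Your argument is correct and is essentially the paper's own: the paper omits the proof, remarking only that the claim is an immediate consequence of the definition \eqref{eq:nc} and the continuity of the exponential map in both arguments, which is exactly the limit-passing argument you spell out. Your extra care about interpreting $u^k\to\overline{u}$ as convergence in the topology of $TM$ is a reasonable and correct filling-in of a detail the paper leaves implicit.
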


We end this section with a real analysis result,  see the  proof in  \cite[Lemma~2, pp. 44]{polya}.
\begin {lemma}\label{le:qfm}
Let $ \left \{  \zeta_{k}\right \}$,$ \left \{  \gamma _{k}\right \}$, $\left \{  \beta  _{k}\right \}$ be  sequences of  nonnegative real numbers satisfying $ \sum_{k=1}^{\infty }\gamma  _{k}< \infty$ and 
$ \sum_{k=1}^{\infty }\beta _{k}<\infty$.   If   $\zeta_{k+1}\leq \left ( 1+ \gamma _{k}\right )\zeta_{k}+\beta _{k}$,  then  $\left \{ \zeta_{k} \right \} $ converges.
\end {lemma}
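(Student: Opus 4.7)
The plan is to reduce the recursion $\zeta_{k+1}\le(1+\gamma_k)\zeta_k+\beta_k$ to the simpler form $\tilde\zeta_{k+1}\le\tilde\zeta_k+\tilde\beta_k$ by dividing out the multiplicative factors, and then deduce convergence from the standard fact that a nonnegative sequence that is almost monotone up to a summable error converges.

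First I would introduce the partial products $\Pi_k:=\prod_{j=1}^{k-1}(1+\gamma_j)$ (with $\Pi_1=1$). Since $\sum_{k=1}^{\infty}\gamma_k<\infty$ and the $\gamma_k$ are nonnegative, the infinite product $\Pi_{\infty}:=\prod_{j=1}^{\infty}(1+\gamma_j)$ converges to a finite number in $[1,+\infty)$; in particular $1\le\Pi_k\le\Pi_{\infty}<\infty$ for all $k$. Now set $\tilde\zeta_k:=\zeta_k/\Pi_k$ and $\tilde\beta_k:=\beta_k/\Pi_{k+1}$. Dividing the hypothesis by $\Pi_{k+1}=(1+\gamma_k)\Pi_k$ yields
\[
\tilde\zeta_{k+1}\;=\;\frac{\zeta_{k+1}}{\Pi_{k+1}}\;\le\;\frac{(1+\gamma_k)\zeta_k+\beta_k}{(1+\gamma_k)\Pi_k}\;=\;\tilde\zeta_k+\tilde\beta_k .
\]
Since $\Pi_{k+1}\ge 1$ we have $0\le\tilde\beta_k\le\beta_k$, so $\sum_{k}\tilde\beta_k<\infty$.

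Next, I would show $\tilde\zeta_k$ converges. Define $s_k:=\tilde\zeta_k+\sum_{j=k}^{\infty}\tilde\beta_j$. From the inequality above, $s_{k+1}=\tilde\zeta_{k+1}+\sum_{j=k+1}^{\infty}\tilde\beta_j\le\tilde\zeta_k+\tilde\beta_k+\sum_{j=k+1}^{\infty}\tilde\beta_j=s_k$, so $\{s_k\}$ is nonincreasing. It is also bounded below by $0$ since $\tilde\zeta_k\ge 0$ and $\tilde\beta_j\ge 0$. Therefore $\{s_k\}$ converges, and because the tail $\sum_{j=k}^{\infty}\tilde\beta_j\to 0$, the sequence $\tilde\zeta_k=s_k-\sum_{j=k}^{\infty}\tilde\beta_j$ also converges to some $\ell\ge 0$.

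Finally, since $\Pi_k\to\Pi_{\infty}\in[1,\infty)$, the original sequence satisfies $\zeta_k=\Pi_k\tilde\zeta_k\to\Pi_{\infty}\ell$, proving convergence. There is really no serious obstacle here; the only delicate point is verifying that $\Pi_{\infty}$ is finite, which follows at once from $\log(1+\gamma_k)\le\gamma_k$ and the summability of $\{\gamma_k\}$. The rest is mechanical manipulation and an application of the monotone convergence principle for real sequences.
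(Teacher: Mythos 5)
Your proof is correct. The paper does not prove this lemma itself but cites it from Polyak's book, and your argument --- normalizing by the bounded partial products $\prod_{j}(1+\gamma_j)$ to reduce to the quasi-monotone case, then adding the tail $\sum_{j\ge k}\tilde\beta_j$ to obtain a nonincreasing sequence bounded below --- is essentially the standard proof given in that reference, with all the delicate points (finiteness of the infinite product via $\log(1+\gamma_k)\le\gamma_k$, and the vanishing of the tail) correctly handled.
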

\subsection{Enlargement of Monotone Vector Fields} \label{sec:EnlMon}
 In this section  we  recall some concepts and results related to  enlargement of vector fields  in  the   Hadamard manifolds  setting, for details see \cite{BatistaGlaydstonFerreira2016}. Throughout this section    $X$  and $Y$  denote multivalued monotone vector fields on $M$   and $\epsilon \geq 0$.
\begin{definition} \label{def.enl.X}
The enlargement of  vector field  $X^{\epsilon}: M   \rightrightarrows  TM $  associated to  $X$   is defined by
$$
X^{\epsilon}(p):=\left\{ u\in T_pM~:~ \left\langle P_{qp}^{-1} u-v, \,\exp_{q}^{-1}p\right\rangle \geq  -\epsilon, ~  q\in \mbox{dom}\,X, ~  v\in X(q) \right\}, \quad  p\in \mbox{dom}\,X.
$$
\end{definition}
Next proposition  shows  that $X^{\epsilon}$ effectively constitutes  an enlargement to $X$.
\begin{proposition} \label{prop.elem.ii} 
  $X\subset X^{\epsilon}$ and   $\emph{dom}\,X \subset \emph{dom}\,X^\epsilon$. In particular,  if $\emph{dom}\, X=M$ then $\emph{dom}\, X^\epsilon=\emph{dom}\,X$. Moreover, if $X$  is maximal then    $X^0=X$.
\end{proposition}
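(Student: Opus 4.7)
The plan is to establish the four assertions by direct unpacking of the definitions, leveraging monotonicity for the inclusion $X\subset X^\epsilon$ and maximality for the converse $X^0\subset X$.

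First, I would prove $X\subset X^\epsilon$. Fix $p\in\mbox{dom}\,X$ and $u\in X(p)$; I need to verify the inequality in Definition~\ref{def.enl.X}. Pick an arbitrary $q\in\mbox{dom}\,X$ and $v\in X(q)$. Since $X$ is monotone,
\[
\langle P_{qp}^{-1}u-v,\,\exp_q^{-1}p\rangle\geq 0\geq -\epsilon,
\]
where the last inequality uses the standing hypothesis $\epsilon\geq 0$. Hence $u\in X^\epsilon(p)$, proving $X\subset X^\epsilon$.

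The inclusion $\mbox{dom}\,X\subset\mbox{dom}\,X^\epsilon$ then follows immediately: if $p\in\mbox{dom}\,X$, there is $u\in X(p)\subset X^\epsilon(p)$, so $X^\epsilon(p)\neq\varnothing$. For the assertion that $\mbox{dom}\,X=M$ implies $\mbox{dom}\,X^\epsilon=\mbox{dom}\,X$, I note that Definition~\ref{def.enl.X} only defines $X^\epsilon(p)$ for $p\in\mbox{dom}\,X$, so $\mbox{dom}\,X^\epsilon\subset\mbox{dom}\,X=M$, and combined with the previous inclusion this forces equality.

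Finally, for the claim that $X^0=X$ whenever $X$ is maximal monotone, one inclusion $X\subset X^0$ is the special case $\epsilon=0$ of the first step. For the reverse inclusion, take $p\in\mbox{dom}\,X$ and $u\in X^0(p)$; by definition,
\[
\langle P_{qp}^{-1}u-v,\,\exp_q^{-1}p\rangle\geq 0\qquad\text{for every }q\in\mbox{dom}\,X,~v\in X(q).
\]
This is exactly the premise of the maximality criterion recalled in Section~\ref{sec:aux}, so maximality of $X$ yields $u\in X(p)$, giving $X^0(p)\subset X(p)$.

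There is no genuine obstacle here: every step is a one-line invocation of either monotonicity, maximality, or the definition of enlargement. The only subtlety worth flagging is that the domain of $X^\epsilon$ is, by convention, built into Definition~\ref{def.enl.X} as a subset of $\mbox{dom}\,X$; without this convention the equality $\mbox{dom}\,X^\epsilon=\mbox{dom}\,X$ under $\mbox{dom}\,X=M$ would still be trivial, but the more general inclusion $\mbox{dom}\,X^\epsilon\subset\mbox{dom}\,X$ would need to be justified separately.
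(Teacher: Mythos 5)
Your proof is correct: each of the four assertions follows exactly as you argue, by unpacking Definition~\ref{def.enl.X} together with the monotonicity and maximality conditions recalled in Section~\ref{sec:aux}. The paper itself omits the proof (it recalls the proposition from the reference on enlargements of monotone vector fields), and your argument is precisely the standard definitional one given there, including the correct observation that $\mbox{dom}\,X^{\epsilon}\subset\mbox{dom}\,X$ is built into the definition.
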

In the next three  propositions we state the main properties used throughout  our presentation, which are  extensions to  the Riemannian context of the corresponding one of linear setting; see \cite{BurachikIusemSvaiter1997}.
\begin{proposition} \label{prop.elem.X}
$X^{\epsilon_2}\subset X^{\epsilon_1}$, for all $\epsilon_1\geq\epsilon_2\geq0$, and $X^{\epsilon_1}+Y^{\epsilon_2}\subset(X+Y)^{\epsilon_1+\epsilon_2}$.
\end{proposition}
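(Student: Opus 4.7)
The plan is to prove both inclusions directly from Definition~\ref{def.enl.X}, unpacking what it means to lie in $X^\epsilon(p)$ in each case.

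For the first inclusion, I would let $u \in X^{\epsilon_2}(p)$ and note that by definition
\[
\langle P_{qp}^{-1} u - v,\, \exp_q^{-1} p \rangle \geq -\epsilon_2
\]
for every $q \in \mathrm{dom}\,X$ and $v \in X(q)$. Since $\epsilon_1 \geq \epsilon_2$, we have $-\epsilon_2 \geq -\epsilon_1$, and the same inequality holds with $\epsilon_1$ in place of $\epsilon_2$, which is exactly the condition for $u \in X^{\epsilon_1}(p)$.

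For the second inclusion, I would take an arbitrary element of $(X^{\epsilon_1} + Y^{\epsilon_2})(p)$, which has the form $u + w$ with $u \in X^{\epsilon_1}(p)$ and $w \in Y^{\epsilon_2}(p)$. To show $u + w \in (X+Y)^{\epsilon_1 + \epsilon_2}(p)$, pick $q \in \mathrm{dom}(X+Y) = \mathrm{dom}\,X \cap \mathrm{dom}\,Y$ and $z \in (X+Y)(q)$, so $z = v + s$ with $v \in X(q)$ and $s \in Y(q)$. Using the linearity of parallel transport, $P_{qp}^{-1}(u+w) = P_{qp}^{-1} u + P_{qp}^{-1} w$, and bilinearity of the inner product gives
\[
\langle P_{qp}^{-1}(u+w) - z,\, \exp_q^{-1} p \rangle = \langle P_{qp}^{-1} u - v,\, \exp_q^{-1} p \rangle + \langle P_{qp}^{-1} w - s,\, \exp_q^{-1} p \rangle.
\]
By the hypotheses on $u$ and $w$, the two terms on the right are bounded below by $-\epsilon_1$ and $-\epsilon_2$ respectively, so their sum is at least $-(\epsilon_1 + \epsilon_2)$, which is the required estimate.

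Both parts are direct verifications from the definition, so no serious obstacle is expected. The only point worth stating carefully is the use of linearity of $P_{qp}^{-1}$ (since parallel transport along a geodesic is a linear isometry between tangent spaces), which is what allows the inner-product estimate in the sum of enlargements to split additively.
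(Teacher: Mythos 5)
Your proof is correct and is exactly the standard direct verification from Definition~\ref{def.enl.X}; the paper itself states this proposition without proof, deferring to \cite{BurachikIusemSvaiter1997} and \cite{BatistaGlaydstonFerreira2016}, where the same argument (monotonicity of the defining inequality in $\epsilon$, and additivity of the inner-product estimate via linearity of $P_{qp}^{-1}$) is used. Your remark about restricting to $q\in\mbox{dom}(X+Y)=\mbox{dom}\,X\cap\mbox{dom}\,Y$ in the second part is the one point that needs care, and you handle it correctly.
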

\begin{proposition} \label{prop.conv.alg.}
Let  $\{ \epsilon^k\}$ be a sequence of positive numbers, and $\{ (p^k, \, u^k)\} $ be a sequence in $TM$.  If $\overline{\epsilon}=\lim_{k\rightarrow\infty}\epsilon^k$, $\overline{p}=\lim_{k\rightarrow\infty}p^k$, $\overline{u}=\lim_{k\rightarrow\infty}u^k$, and $u^k\in X^{\epsilon_k}(p^k)$ for all $k$, then $\overline{u}\in X^{\overline{\epsilon}}(\overline{p})$;
\end{proposition}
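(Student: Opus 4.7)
The plan is to fix an arbitrary test pair $(q,v)$ with $q\in\mathrm{dom}\,X$ and $v\in X(q)$, write down the defining inequality of $X^{\epsilon^k}$ at the $k$-th stage, and pass to the limit in $k$, relying on the continuity (in fact smoothness, in the Hadamard setting) of the exponential map, its inverse, and parallel transport along the unique connecting geodesic.

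More precisely, from the hypothesis $u^k\in X^{\epsilon^k}(p^k)$ and Definition~\ref{def.enl.X}, for every $k$ we have
\begin{equation*}
\left\langle P_{q p^k}^{-1} u^k - v,\; \exp_q^{-1} p^k \right\rangle \;\geq\; -\epsilon^k.
\end{equation*}
The first step is to argue that $\exp_q^{-1}p^k \to \exp_q^{-1}\overline{p}$ in $T_qM$; this is immediate because on a Hadamard manifold $\exp_q^{-1}:M\to T_qM$ is a (smooth) diffeomorphism, hence in particular continuous, and $p^k\to\overline{p}$. The second step is to argue that $P_{qp^k}^{-1}u^k \to P_{q\overline{p}}^{-1}\overline{u}$ in $T_qM$. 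Here convergence $u^k\to\overline{u}$ is understood as convergence in the tangent bundle $TM$; since on a Hadamard manifold the geodesic from $q$ to a variable endpoint depends smoothly on that endpoint, parallel transport along this geodesic is jointly continuous in the endpoint $p^k$ and in the vector being transported. Thus $P_{qp^k}^{-1}u^k\in T_qM$ depends continuously on $(p^k,u^k)\in TM$, and passing to the limit yields the claimed convergence.

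Combining these two convergences with continuity of the inner product on $T_qM\times T_qM$, the left-hand side of the displayed inequality converges to $\langle P_{q\overline{p}}^{-1}\overline{u}-v,\,\exp_q^{-1}\overline{p}\rangle$, while the right-hand side converges to $-\overline{\epsilon}$. Preservation of weak inequalities under limits therefore gives
\begin{equation*}
\left\langle P_{q\overline{p}}^{-1}\overline{u} - v,\; \exp_q^{-1}\overline{p}\right\rangle \;\geq\; -\overline{\epsilon}.
\end{equation*}
Since $(q,v)$ was an arbitrary admissible test pair, by Definition~\ref{def.enl.X} this shows $\overline{u}\in X^{\overline{\epsilon}}(\overline{p})$.

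The only nontrivial point is the joint continuity of parallel transport $P_{qp}^{-1}$ as a function of the endpoint $p$ (with $q$ fixed) acting on vectors that themselves vary with $p$; once that continuity is in hand, the rest is just taking limits in a continuous inner product. This continuity, however, is standard on Hadamard manifolds because the connecting geodesic between two points exists, is unique, and depends smoothly on its endpoints, so I would only invoke it and not reprove it.
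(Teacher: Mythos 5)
Your argument is correct and is essentially the canonical one: the paper itself states Proposition~\ref{prop.conv.alg.} without proof, deferring to \cite{BatistaGlaydstonFerreira2016}, and the proof there is exactly this passage to the limit in the defining inequality of Definition~\ref{def.enl.X} for a fixed test pair $(q,v)$, using continuity of $\exp_q^{-1}$ and the joint continuity of parallel transport along the unique connecting geodesic. The only point left implicit is that $\overline{p}\in\mbox{dom}\,X$ so that $X^{\overline{\epsilon}}(\overline{p})$ is defined at all, which is harmless here since assumption {\bf A1} gives $\mbox{dom}\,X=M$ throughout the paper.
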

\begin{proposition} \label{prop.boun.boun.}
If $X$ is maximal monotone and $\emph{dom}\,X=M$,  then $X^\epsilon$ is bounded on bounded sets,  for all $\epsilon\geq0$.
\end{proposition}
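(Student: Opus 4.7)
The plan is to reduce the boundedness of $X^\epsilon$ on a bounded set to the boundedness of $X$ itself on a slightly larger bounded set, by testing the definition of the enlargement at a cleverly chosen point that depends on $u$. The first ingredient is the (non-trivial) fact that a maximal monotone vector field on a Hadamard manifold with $\mbox{dom}\,X=M$ is already locally bounded, hence bounded on bounded sets; this is established in \cite{LiLopesMartin-Marquez2009}, and I would cite it at the outset. Granted this, it will suffice to transfer the bound from $X$ to $X^\epsilon$ at an additive cost of $\epsilon$.

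So fix a bounded set $V\subset M$ and let $\tilde V:=\{q\in M\,:\,d(q,V)\leq 1\}$, which is still bounded; set $M_X:=m_X(\tilde V)<+\infty$. Pick any $p\in V$ and any $u\in X^\epsilon(p)$ with $u\neq 0$ (the case $u=0$ being trivial), and choose as test point $q:=\exp_p(u/\|u\|)\in\tilde V$. Applying Definition~\ref{def.enl.X} with this $q$ and an arbitrary $v\in X(q)$ (which exists because $\mbox{dom}\,X=M$) yields
\[
\langle P_{qp}^{-1}u,\exp_q^{-1}p\rangle \;-\; \langle v,\exp_q^{-1}p\rangle \;\geq\; -\epsilon.
\]
Now I would make the first inner product explicit: letting $\gamma:[0,1]\to M$ be the unit-speed geodesic with $\gamma(0)=p$, $\gamma'(0)=u/\|u\|$, and $\gamma(1)=q$, parallel transport of $u=\|u\|\gamma'(0)$ along $\gamma$ gives $P_{qp}^{-1}u=\|u\|\gamma'(1)$, while $\exp_q^{-1}p=-\gamma'(1)$, so the first term equals $-\|u\|$. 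Substituting and using Cauchy--Schwarz together with $\|\exp_q^{-1}p\|=1$ produces the uniform estimate $\|u\|\leq\|v\|+\epsilon\leq M_X+\epsilon$, valid for every $p\in V$ and $u\in X^\epsilon(p)$; hence $m_{X^\epsilon}(V)\leq M_X+\epsilon<+\infty$.

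The main obstacle is really the preliminary local boundedness of $X$ itself on the Hadamard manifold; once that is in hand, the rest is just an explicit computation with parallel transport along a unit-speed geodesic. The conceptual step is to let the test point $q$ depend on $u$: by pushing $p$ one unit in the direction $u/\|u\|$, the "good" term in the enlargement inequality captures exactly $-\|u\|$, which can then be balanced against the bounded quantity $\|v\|$ supplied by the boundedness of $X$ on $\tilde V$.
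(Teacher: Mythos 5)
Your proof is correct. Note that the paper does not actually prove Proposition~\ref{prop.boun.boun.}: it is recalled without proof from \cite{BatistaGlaydstonFerreira2016}, so there is no in-text argument to compare against. Your route is, however, exactly the standard one used there and in the Euclidean prototype \cite{BurachikIusemSvaiter1997}: first invoke local boundedness of the maximal monotone field $X$ with full domain (available in \cite{LiLopesMartin-Marquez2009}; together with Hopf--Rinow compactness of $\overline{V}$ this gives $m_X(\tilde V)<+\infty$), then test Definition~\ref{def.enl.X} at $q=\exp_p(u/\|u\|)$. The computation $P_{qp}^{-1}u=\|u\|\gamma'(1)$ and $\exp_q^{-1}p=-\gamma'(1)$ along the unit-speed geodesic is right, and the resulting bound $\|u\|\le \|v\|+\epsilon\le m_X(\tilde V)+\epsilon$ is exactly what is needed. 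The only thing I would tighten is the attribution: the heavy lifting is the local boundedness of $X$, and you should cite the precise statement in \cite{LiLopesMartin-Marquez2009} (or note that plain monotonicity with $\mbox{int}(\mbox{dom}\,X)=M$ already suffices for it, as in the classical Euclidean result) rather than leaving it as an unpinned reference.
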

\section{ Inexact Proximal Point Method for Variational Inequalities} \label{Sec:InProxVIP}
In this section,  we introduce  an  inexact version of the proximal  point method for variational inequalities in Hadamard manifolds. It is worth noting that, the variational inequality problem   was first  introduced in \cite{Nemeth2003},  for  single-valued  vector  fields  on Hadamard manifolds,   and   in \cite{LiYao2012} for  multivalued  vector  fields in  Riemannian manifolds. 

Let $X:M\rightrightarrows TM$ be a multivalued vector field and $ \Omega \subset M$ be a nonempty set. The {\it variational inequality problem}  for  $X$ and   $C$,  denoted by VIP(X,$\Omega$), consists  of  finding $p^*\in\Omega$ such that there exists $u\in X(p^*)$ satisfying
\begin{equation} \label{eq:vipp}
 \langle u,\,\exp^{-1}_{p^*}q\rangle\geq0, \qquad   q\in\Omega.
\end{equation}
Using \eqref{eq:nc}, i.e., the definition of normal cone to $\Omega$,  VIP(X,$\Omega$) becomes the problem of finding an $p^*\in\Omega$ that satisfies the inclusion
\begin{equation} \label{eq.vip}
0\in X(p)+N_{\Omega}(p).
\end{equation}
\begin{remark}
In particular, if  $\Omega=M$, then  $N_{\Omega}(p)=\{0\}$ and  \mbox{VIP}\,(X,$\Omega$) are problems with regard to finding $p^*\in\Omega$ such that $0\in X(p^*).$
\end{remark}
Hereafter, $S(X,\,\Omega)$  denotes the  {\it solution set of the inclusion}  (\ref{eq.vip}). We require the following  three assumptions:
\begin{itemize}
\item[{\bf A1.}] $\mbox{dom}\, X=M$ and $\Omega$ closed and convex;
\item[ {\bf A2.}] $X$ is maximal monotone;
\item[ {\bf A3.}] $S(X,\,\Omega)\neq \varnothing$.
\end{itemize} 
In the following we state two  algorithms  to solve \eqref{eq:vipp} or equivalently  \eqref{eq.vip}. 
{\it To state the  algorithms  take  two  real numbers $ \hat{\lambda}$ and  $\tilde{\lambda}$  satisfying      $0<\hat{\lambda}\leq\tilde{\lambda}$ and  four  exogenous  sequences   of positive real  numbers}, $\{\lambda_k\}$,    $\{\sigma_k\}$,  $\{\theta_k\}$ and $\{\epsilon_k\}$ satisfying  
\begin{equation} \label{eq:es}
\hat{\lambda}\leq \lambda_k\leq\tilde{\lambda},\qquad  \qquad \sum_{k=1}^{+\infty} \epsilon_k< + \infty,  \qquad    \qquad  \sum_{k=1}^{+\infty} \sigma_k< + \infty , \qquad    \qquad  \sum_{k=1}^{+\infty} \theta_k< + \infty .
\end{equation}

 \vspace{.6cm}
 
 The first version of {\it inexact proximal point method} for solving \eqref{eq.vip}  is defined as follows: \\
 
\vspace{.2cm}
\hrule
\begin{algorithm} \label{Alg:InexProxae}
 {\bf Inexact proximal point method with absolute error tolerance}  \\
\hrule
\begin{description}
\item[0.] Take $\{\lambda_k\}$,   $\{\epsilon_k\}$ and $\{\theta_k\}$ satisfying \eqref{eq:es}, and    $p^0 \in  \Omega$. Set $k=0$.
\item [1.] Given $p^{k} \in \Omega$, compute   $p^{k+1} \in \Omega$  and  $e^{k+1}\in T_{p^{k+1}}M$ such that
\begin{equation} \label{eq:ineqae}
 e^{k+1} \in X^{\epsilon_k}(p^{k+1})  +N_{\Omega}(p^{k+1})- \lambda_{k} \exp^{-1}_{p^{k+1}}p^{k},
 \end{equation} 
\begin{equation} \label{eq;ecae}
 \| e^{k+1}\| \leq \theta_ {k}.
\end{equation}
\item[ 2.] If  $p^{k}=p^{k+1}$, then {\bf stop}; otherwise,   set $k\gets k+1$, and go to step~{\bf  1}.
\end{description}
\hrule
\end{algorithm}
 \vspace{.2cm}

It is worth to noting that Algorithm~\ref{Alg:InexProxae}  is inexact in two sense, namely, $X^{\epsilon_k}$ is an enlargement of the  vector  field $X$  and   each iteration $p^{k+1}$  is an approximated solution of the vectorial inclusion   $0\in X^{\epsilon_k}(p)  +N_{\Omega}(p)- \lambda_{k} \exp^{-1}_{p}p^{k}$ satisfying the error  criterion  \eqref{eq;ecae}. Note that for $\theta_ {k}=0$ in \eqref{eq;ecae}, Algorithm~\ref{Alg:InexProxae} merges into  algorithm introduced in \cite{BatistaGlaydstonFerreira2016}.  The error criterion  \eqref{eq;ecae} was introduced in  the  celebrated paper \cite{rocka01}  to  analyze an  inexact version  of the  proximal point method to find  zeroes of maximal monotone operators  in  linear context; see  \cite{WangLiLopezYao2015, WangLiChongGenaro2016} for a generalization to Riemannian setting. 
 
The second version of {\it inexact proximal point method} for solving \eqref{eq.vip}  is defined as follows: \\

 \vspace{.2cm}
\hrule
\begin{algorithm} \label{Alg:InexProx}
 {\bf Inexact proximal point method with relative error  tolerance}  \\
\hrule
\begin{description}
\item[0.] Take $\{\lambda_k\}$,   $\{\epsilon_k\}$ and $\{\sigma_k\}$ satisfying \eqref{eq:es}, and    $p^0 \in  \Omega$. Set $k=0$.
\item [1.] Given $p^{k} \in \Omega$, compute   $p^{k+1} \in \Omega$  and  $e^{k+1}\in T_{p^{k+1}}M$ such that
\begin{equation} \label{eq:ineq}
 e^{k+1} \in X^{\epsilon_k}(p^{k+1})  +N_{\Omega}(p^{k+1})- \lambda_{k} \exp^{-1}_{p^{k+1}}p^{k},
 \end{equation}
\begin{equation} \label{eq;ec}
 \| e^{k+1}\| \leq \sigma_ {k}d(p^{k}, p^{k+1}).
\end{equation}
\item[ 2.] If  $p^{k}=p^{k+1}$, then {\bf stop}; otherwise,   set $k\gets k+1$, and go to step~{\bf  1}.
\end{description}
\hrule
\end{algorithm}
 \vspace{.2cm}

First we remark  that Algorithm~\ref{Alg:InexProx} differs from Algorithm~\ref{Alg:InexProxae}  only in the errors  criterion adopted, more precisely,   between  \eqref{eq;ecae} and \eqref{eq;ec}.  The error criterion  \eqref{eq;ec} was also introduced in  \cite{rocka01}    in  linear setting. When $\sigma_{k+1}=0$ in \eqref{eq;ec}, Algorithm~\ref{Alg:InexProx} merges into  algorithm introduced in \cite{BatistaGlaydstonFerreira2016}.  A variant of  error criterium   \eqref{eq;ec}  to analyze \eqref{eq:ineq},  for the particular case $\epsilon_k \equiv 0$,  has appeared in \cite{TangHuang2013}.

In the following we present  the well-definedness and convergence properties of the sequence $\{p^k\}$ generated by  Algorithms~\ref{Alg:InexProxae} and \ref{Alg:InexProx}. We begin with the well-definition. 
\begin{theorem}\label{th:wdef} 
Each  sequence $\{p^k\}$ generated by Algorithms~\ref{Alg:InexProxae} or \ref{Alg:InexProx}   is  well defined.
\end{theorem}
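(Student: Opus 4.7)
The plan is to argue by induction on $k$, reducing at each step to the exact case by exhibiting one admissible choice of $(p^{k+1}, e^{k+1})$: namely, the unique exact proximal solution together with a zero error vector.

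For the inductive step, assume $p^k \in \Omega$ has already been produced. By Assumptions {\bf A1} and {\bf A2} the vector field $X$ is maximal monotone with $\mbox{dom}\,X = M$, $\Omega$ is closed and convex, and $\lambda_k > 0$ by \eqref{eq:es}. Hence Lemma~\ref{le:esvi} applies with $q = p^k$ and $\lambda = \lambda_k$ and furnishes a (unique) $p^{k+1} \in M$ satisfying
\[
0 \in X(p^{k+1}) + N_\Omega(p^{k+1}) - \lambda_k \exp^{-1}_{p^{k+1}} p^k.
\]
Since $N_\Omega(p)$ is defined for $p \in \Omega$, the above inclusion forces $p^{k+1} \in \Omega$, preserving the inductive hypothesis for the next step.

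Next, I would invoke Proposition~\ref{prop.elem.ii} (the containment $X \subset X^{\epsilon_k}$) to promote the exact inclusion above to the enlargement version:
\[
0 \in X^{\epsilon_k}(p^{k+1}) + N_\Omega(p^{k+1}) - \lambda_k \exp^{-1}_{p^{k+1}} p^k.
\]
Setting $e^{k+1} := 0 \in T_{p^{k+1}}M$ then realizes the inclusions \eqref{eq:ineqae} and \eqref{eq:ineq}, and the error bounds \eqref{eq;ecae} and \eqref{eq;ec} hold trivially since $\|e^{k+1}\| = 0 \leq \theta_k$ and $0 \leq \sigma_k d(p^k, p^{k+1})$.

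There is no real obstacle here—the statement is essentially a bookkeeping consequence of Lemma~\ref{le:esvi} combined with the fact that the algorithmic conditions are inequalities that zero always satisfies. The only point requiring a brief comment is that the algorithm only demands \emph{some} admissible pair $(p^{k+1}, e^{k+1})$ exist; its output need not be the exact proximal iterate, but the existence of that exact iterate suffices for well-definedness.
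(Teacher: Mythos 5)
Your proposal is correct and follows essentially the same route as the paper: invoke Lemma~\ref{le:esvi} to obtain the unique exact proximal iterate, use Proposition~\ref{prop.elem.ii} to pass from $X$ to $X^{\epsilon_k}$, and take $e^{k+1}=0$ so that both error criteria hold trivially. The only differences are cosmetic---you make the induction explicit and spell out why the exact solution lies in $\Omega$, which the paper leaves implicit.
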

\begin{proof}
First of all note that for  each $p^{k} \in  \Omega$ and $ \lambda_k>0$,   Lemma~\ref{le:esvi} implies that 
\begin{equation} \label{eq:ieq}
0\in X(p)+N_{\Omega}(p)-\lambda_k\exp^{-1}_pp^{k}, 
\end{equation} 
has an  unique solution in $\Omega$. Since  $\mbox{dom} \,X=M$, Proposition~\ref{prop.elem.ii} and item (i) of Proposition~\ref{prop.boun.boun.} imply that $ X(p)\subseteq X^{\epsilon}(p)$ for all $p\in M$ and $\epsilon\geq 0$.  Therefore, letting  $e^{k+1}=0$ and $p^{k+1}$ as  the  solution of \eqref{eq:ieq}, we conclude they also   satisfy  \eqref{eq:ineqae}-\eqref{eq;ec}, which proof the well definition. 
\end{proof}
\begin{remark} \label{r:pcef}
 Using Proposition~\ref{prop.elem.ii} we conclude that $N_{\Omega}\subset N_{\Omega}^0$. Thus, from second. part   of Proposition~\ref{prop.elem.X}, we have  $X^{\epsilon_k}+N_{\Omega} \subset (X+N_{\Omega})^{\epsilon_k}$, for all  $k=0,1, \ldots$. Therefore, using \eqref{eq:ineq}, the following inequality holds
 \begin{equation} \label{eq:icteef}
e^{k+1}\in (X+N_{\Omega})^{\epsilon_k}(p^{k+1})- \lambda_k \exp^{-1}_{p^{k+1}}p^{k}, \qquad k=0,1, \ldots.
\end{equation}
Note that the  condition \eqref{eq:icteef}    is less restrictive than  \eqref{eq:ineqae}  and \eqref{eq:ineq}. 
\end{remark}

{\it  From now on, unless explicitly stated,   $\{p^k\}$ denotes the sequence   generated by Algorithm~\ref{Alg:InexProxae} or \ref{Alg:InexProx}}. 
 It is worth  to  noting that   $p^{k}=p^{k+1}$  implies   $p^{k+1}\in S(X,\,\Omega)$. Thus,  without loss of generality,   we  assume   that  $\{p^k\}$  is infinite.
\subsection{Convergence analysis}
In this section our aim  is to prove the convergence of the sequence   $\{p^k\}$  to a point in  $S(X,\,\Omega)$. For that we first need  some auxiliary results. We begin establishing  a useful inequality. 
\begin{lemma} \label{le:pc} 
For all   $\eta>0$, the following inequality holds
$$
\left[1-\frac{\eta}{\lambda_k} \right]d^2(q, p^{k+1})\leq d^2(q, p^{k}) -d^2(p^{k}, p^{k+1}) + \frac{1}{\eta \lambda_k} \|e^{k+1}||^2 + \frac{2}{\lambda_k} \epsilon_ {k}, \qquad k=0,1, \ldots.
$$
\end{lemma}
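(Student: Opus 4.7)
The plan is to prove the inequality for $q \in S(X,\Omega)$ (this hypothesis appears to be implicit in the statement, since the whole analysis targets Fej\'er-type behavior with respect to solutions). By Remark~\ref{r:pcef}, the iteration \eqref{eq:ineqae} (or \eqref{eq:ineq}) gives
$$
e^{k+1} + \lambda_k \exp^{-1}_{p^{k+1}} p^k \in (X+N_{\Omega})^{\epsilon_k}(p^{k+1}),
$$
while $q \in S(X,\Omega)$ gives $0 \in (X+N_{\Omega})(q)$. Feeding this pair into Definition~\ref{def.enl.X} applied to $X+N_{\Omega}$, and using that parallel transport is an isometry together with the Hadamard identity $P_{q,p}\exp^{-1}_q p = -\exp^{-1}_p q$, the enlargement inequality can be transported to $T_{p^{k+1}}M$ and becomes
$$
\langle e^{k+1} + \lambda_k \exp^{-1}_{p^{k+1}} p^k,\, \exp^{-1}_{p^{k+1}} q \rangle \le \epsilon_k,
$$
i.e.
$$
\lambda_k \langle \exp^{-1}_{p^{k+1}} p^k,\, \exp^{-1}_{p^{k+1}} q \rangle \le \epsilon_k - \langle e^{k+1},\, \exp^{-1}_{p^{k+1}} q \rangle.
$$

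Next I apply the cosine-law inequality \eqref{eq:coslaw} with $p_1=q$, $p_2=p^k$, $p_3=p^{k+1}$, which rearranges to
$$
2\langle \exp^{-1}_{p^{k+1}} q,\, \exp^{-1}_{p^{k+1}} p^k \rangle \ge d^2(q,p^{k+1}) + d^2(p^k,p^{k+1}) - d^2(q,p^k).
$$
Combining this with the inclusion-derived bound (after multiplying by $2/\lambda_k$) yields
$$
d^2(q,p^{k+1}) + d^2(p^k,p^{k+1}) - d^2(q,p^k) \le \tfrac{2\epsilon_k}{\lambda_k} - \tfrac{2}{\lambda_k}\langle e^{k+1},\, \exp^{-1}_{p^{k+1}} q \rangle.
$$

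For the last step I bound the residual inner product by Cauchy--Schwarz, $|\langle e^{k+1},\exp^{-1}_{p^{k+1}} q\rangle|\le \|e^{k+1}\|\,d(q,p^{k+1})$, and then use Young's inequality $2ab \le a^2/\eta + \eta b^2$ with $a=\|e^{k+1}\|$ and $b=d(q,p^{k+1})$ to get
$$
-\tfrac{2}{\lambda_k}\langle e^{k+1},\, \exp^{-1}_{p^{k+1}} q \rangle \le \tfrac{1}{\eta\lambda_k}\|e^{k+1}\|^2 + \tfrac{\eta}{\lambda_k}\,d^2(q,p^{k+1}).
$$
Substituting and transferring the $(\eta/\lambda_k)\,d^2(q,p^{k+1})$ term to the left-hand side delivers the stated inequality. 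The delicate step is the first one: recognizing that $e^{k+1}+\lambda_k\exp^{-1}_{p^{k+1}}p^k$ belongs to the \emph{composite} enlargement $(X+N_{\Omega})^{\epsilon_k}$ (not just $X^{\epsilon_k}+N_{\Omega}$, which would be too small to pair with a generic $0\in(X+N_{\Omega})(q)$) and then rewriting Definition~\ref{def.enl.X} as an inner-product bound in $T_{p^{k+1}}M$; after that the cosine law and Young's inequality do the routine work.
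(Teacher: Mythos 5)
Your proof is correct and follows essentially the same route as the paper's: pass to the composite enlargement $(X+N_{\Omega})^{\epsilon_k}$, pair with $0\in(X+N_{\Omega})(q)$ for $q\in S(X,\Omega)$ via Definition~\ref{def.enl.X} and the parallel-transport identity, invoke the cosine law \eqref{eq:coslaw} at $p^{k+1}$, and finish with Young's inequality with parameter $\eta$. You also correctly identify the implicit restriction $q\in S(X,\Omega)$, which the lemma statement leaves unstated but the paper's own proof uses.
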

\begin{proof}
First  we note that   \eqref{eq:ineq} or \eqref{eq:ineqae} is equivalent to 
\begin{equation} \label{eq:icte}
e^{k+1}+ \lambda_k \exp^{-1}_{p^{k+1}}p^{k}\in (X+N_{\Omega})^{\epsilon_k}(p^{k+1}), \qquad k=0,1, \ldots.
\end{equation}
Considering that $P_{qp^{k+1}}^{-1} \exp^{-1}_{q}p^{k+1}=-\exp^{-1}_{p^{k+1}}q$ and the parallel transport being isometric, the  last inclusion together  with  Definition~\ref{def.enl.X} yields
$$
-\left\langle e^{k+1}+\lambda_k \exp^{-1}_{p^{k+1}}p^{k},~\exp^{-1}_{p^{k+1}}q\right\rangle+\left\langle v,~-\exp^{-1}_qp^{k+1}\right\rangle  \geq  -\epsilon_k,\quad  
$$
for all $q\in\Omega$,   $v\in(X+N_{\Omega})(q)$ and all $k=0,1, \ldots$. In particular, if $q\in S(X,\,\Omega)$, then $0\in (X+N_{\Omega})(q)$ and the last inequality becomes
\begin{eqnarray*}
-\left\langle e^{k+1}+ \lambda_k \exp^{-1}_{p^{k+1}}p^{k}, \exp^{-1}_{p^{k+1}}q\right\rangle\geq -\epsilon_k, 
\end{eqnarray*}
for all $q\in S(X,\,\Omega)$ and all  $k=0,1, \ldots$. Using  the last inequality and \eqref{eq:coslaw} with $p_1=p^{k}$, $p_2=q$, and $p_3=p^{k+1}$, along with some  algebraic calculations,   we obtain
$$
\frac{2}{\lambda_k}\left(\left\langle e^{k+1}, \exp^{-1}_{p^{k+1}}q\right\rangle -\epsilon_ k\right)  \leq d^2(q, p^{k})-d^2(p^{k}, p^{k+1})-d^2(q,p^{k+1}), 
$$
for all $q\in S(X,\,\Omega)$ and all  $k=0,1, \ldots$. The last inequality gives
$$
d^2(q, p^{k+1})\leq d^2(q, p^{k}) -d^2(p^{k}, p^{k+1}) - \frac{2}{\lambda_k}\left\langle e^{k+1} , \exp^{-1}_{p^{k+1}}q\right\rangle + \frac{2\epsilon_k}{\lambda_k} $$
for all $q\in S(X,\,\Omega)$ and all  $k=0,1, \ldots$.  On the other hand, some algebraic manipulations yields 
$$
-\left\langle e^{k+1},\exp^{-1}_{p^{k+1}}q \right\rangle  \leq \frac{1}{2\eta}\|e^{k+1}||^2+  \frac{1}{2}\eta d^2(p^{k+1}, q). 
$$ 
Therefore, combining  two previous   last inequalities  yields the inequality of the lemma. 
\end{proof}
\begin{corollary}\label{cor:ia1}
Let $\{p^k\}$ be generated by Algorithms~\ref{Alg:InexProxae}. Then,  there exists a ${\bar k}\in \mathbb{N}$  such that, for all $ k\geq {\bar k}$, there holds
$$
 \displaystyle d^2(q, p^{k+1})\leq \left(1+\frac{2\theta_k}{\hat \lambda } \right)d^2(q, p^{k}) -  d^2(p^{k+1}, p^{k})+    \frac{2}{\hat \lambda } \left( {\theta_k} +   2 \epsilon_ k\right).
$$
\end{corollary}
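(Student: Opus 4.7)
The plan is to specialize Lemma~\ref{le:pc} by choosing the free parameter $\eta$ in terms of $\theta_k$, and then perform a division/reorganization to reach the target form. First I would feed in the absolute error bound \eqref{eq;ecae}, namely $\|e^{k+1}\|^2\le\theta_k^2$, and take $\eta=\theta_k$ (when $\theta_k>0$; if $\theta_k=0$ then $e^{k+1}=0$ and the conclusion follows from Lemma~\ref{le:pc} by letting $\eta\downarrow 0$). With this choice the $\tfrac{1}{\eta\lambda_k}\|e^{k+1}\|^2$ term collapses to $\theta_k/\lambda_k$, so Lemma~\ref{le:pc} becomes
\[
\Big(1-\frac{\theta_k}{\lambda_k}\Big)d^2(q,p^{k+1})\le d^2(q,p^k)-d^2(p^k,p^{k+1})+\frac{\theta_k+2\epsilon_k}{\lambda_k}.
\]

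Since $\sum_k\theta_k<\infty$ forces $\theta_k\to 0$, I can fix $\bar k$ such that $\theta_k\le\hat\lambda/2$ for every $k\ge\bar k$; combined with $\lambda_k\ge\hat\lambda$ this puts $A_k:=1-\theta_k/\lambda_k$ into $[1/2,1]$. The next step is to divide through by $A_k>0$ and massage the three resulting terms using three elementary numerical facts: (a) $1/A_k\ge 1$, so the negative term satisfies $-d^2(p^k,p^{k+1})/A_k\le -d^2(p^k,p^{k+1})$, preserving the coefficient $-1$ required on the progress term; (b) the elementary estimate $1/(1-x)\le 1+2x$ on $[0,1/2]$ together with $\lambda_k\ge\hat\lambda$ gives $1/A_k\le 1+2\theta_k/\lambda_k\le 1+2\theta_k/\hat\lambda$, supplying the exact multiplier in front of $d^2(q,p^k)$; (c) on the same range, $1+2\theta_k/\hat\lambda\le 2$ and $1/\lambda_k\le 1/\hat\lambda$, so $\tfrac{1}{A_k}\cdot\tfrac{\theta_k+2\epsilon_k}{\lambda_k}\le \tfrac{2(\theta_k+2\epsilon_k)}{\hat\lambda}$, which is precisely the tail term in the statement.

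Stitching (a)--(c) together produces the corollary exactly as written. The main obstacle is really just careful bookkeeping: one must juggle three different uses of $A_k^{-1}$ simultaneously --- on the positive $d^2(q,p^k)$ term, on the negative progress term, and on the combined error term --- and verify that each of them produces the specific constants appearing in the conclusion. Conceptually the only bit of ingenuity is selecting $\eta=\theta_k$, motivated by the observation that a constant $\eta$ would yield a $\theta_k^2$ contribution, whereas the corollary asks for a linear dependence on $\theta_k$; tying $\eta$ to $\theta_k$ linearizes the error bound at the mild cost of needing the threshold $\bar k$.
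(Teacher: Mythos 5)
Your proposal is correct and follows essentially the same route as the paper: apply Lemma~\ref{le:pc} with $\eta=\theta_k$, use \eqref{eq;ecae} to reduce the error term to $\theta_k/\lambda_k$, pick $\bar k$ so that $\theta_k/\lambda_k\le 1/2$, and divide by $1-\theta_k/\lambda_k$ using exactly the three bounds you list. Your explicit handling of the degenerate case $\theta_k=0$ is a harmless extra (the paper takes the $\theta_k$ to be positive), and the rest matches the paper's argument step for step.
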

\begin{proof}
First, applying Lemma~\ref{le:pc} with  $\eta= \theta_k$ and then using  \eqref{eq;ecae}   yields
$$
\left(1-\frac{ \theta_k}{\lambda_k} \right)d^2(q, p^{k+1})\leq d^2(q, p^{k}) -d^2(p^{k}, p^{k+1}) + \frac{\theta_k }{ \lambda_k}  + \frac{2}{\lambda_k} \epsilon_ {k}, \qquad k=0,1, \ldots.
$$
 It follows from  \eqref{eq:es}  that there exists  a  ${\bar k}\in \mathbb{N}$  such that $0\leq  \theta_k<  \lambda_k/2$, for   all $k\geq {\bar k}$. Thus, we conclude from the last inequality that 
\begin{equation} \label{eq.fejercat}
d^2(q, p^{k+1})\leq \left(1+\frac{ \frac{ \theta_k}{\lambda_k}}{1-\frac{ \theta_k}{\lambda_k}} \right)d^2(q, p^{k}) - \frac{\lambda_k}{\lambda_k- \theta_k} d^2(p^{k+1}, p^{k})+    \frac{1}{ \lambda_k-\theta_k} \left( {\theta_k} +   2 \epsilon_ k\right),  \quad  k\geq {\bar k}, 
\end{equation}
and  the deride inequality follows by using again   $0\leq  \theta_k<  \lambda_k/2$ and first inequality in \eqref{eq:es}. 
\end{proof}
\begin{corollary} \label{cor:ia2}
Let $\{p^k\}$ be generated by Algorithms~\ref{Alg:InexProx}. Then,  there exists a ${\bar k}\in \mathbb{N}$  such that, for all $ k\geq {\bar k}$, there holds
$$
\displaystyle d^2(q, p^{k+1})\leq \left(1+\frac{2\sigma_k}{\hat \lambda } \right)d^2(q, p^{k}) - d^2(p^{k+1}, p^{k})+   \frac{4}{\hat \lambda} \epsilon_ k.
$$
\end{corollary}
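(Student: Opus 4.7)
The plan is to mimic the proof of Corollary~\ref{cor:ia1}, but with the special choice of the parameter $\eta$ in Lemma~\ref{le:pc} tuned to the relative error criterion \eqref{eq;ec}. Since \eqref{eq;ec} reads $\|e^{k+1}\|\leq \sigma_k\,d(p^k,p^{k+1})$, the natural choice is $\eta=\sigma_k$, because then the quadratic term $\tfrac{1}{\eta\lambda_k}\|e^{k+1}\|^2$ becomes exactly $\tfrac{\sigma_k}{\lambda_k}d^2(p^k,p^{k+1})$, which combines with the $-d^2(p^k,p^{k+1})$ already present on the right-hand side of Lemma~\ref{le:pc} to produce the common factor $(1-\sigma_k/\lambda_k)$ on both sides.

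I would carry this out in four short steps. First, apply Lemma~\ref{le:pc} with $\eta=\sigma_k$ and plug in \eqref{eq;ec} to get
\[
\left(1-\frac{\sigma_k}{\lambda_k}\right)d^2(q,p^{k+1})\leq d^2(q,p^k)-\left(1-\frac{\sigma_k}{\lambda_k}\right)d^2(p^k,p^{k+1})+\frac{2}{\lambda_k}\epsilon_k.
\]
Second, use the summability $\sum_k\sigma_k<+\infty$ from \eqref{eq:es} to choose $\bar k\in\mathbb{N}$ with $\sigma_k<\lambda_k/2$ for all $k\geq\bar k$; this makes the factor $1-\sigma_k/\lambda_k$ strictly positive (indeed greater than $1/2$), so we may divide by it. Third, control the resulting coefficients: writing $\frac{1}{1-\sigma_k/\lambda_k}=1+\frac{\sigma_k/\lambda_k}{1-\sigma_k/\lambda_k}$ and invoking $\sigma_k<\lambda_k/2$ together with $\lambda_k\geq\hat\lambda$ gives
\[
\frac{1}{1-\sigma_k/\lambda_k}\leq 1+\frac{2\sigma_k}{\lambda_k}\leq 1+\frac{2\sigma_k}{\hat\lambda},\qquad \frac{2}{\lambda_k(1-\sigma_k/\lambda_k)}\leq \frac{4}{\lambda_k}\leq \frac{4}{\hat\lambda}.
\]
Fourth, substitute these bounds to obtain the stated inequality.

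There is essentially no hard step; the only thing to be careful about is that after dividing by $1-\sigma_k/\lambda_k$ one must not pick up an extra factor in front of $d^2(p^k,p^{k+1})$, which is precisely why the choice $\eta=\sigma_k$ is crucial. Any other $\eta$ would either spoil the clean coefficient $-1$ in front of $d^2(p^k,p^{k+1})$ or destroy the cancellation that keeps the right-hand side summable along the iterations.
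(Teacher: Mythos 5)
Your proof is correct and follows essentially the same route as the paper: apply Lemma~\ref{le:pc} with $\eta=\sigma_k$, use \eqref{eq;ec} to absorb the error term into the coefficient of $d^2(p^k,p^{k+1})$, and then divide by $1-\sigma_k/\lambda_k>1/2$ for $k\geq\bar k$, bounding the resulting coefficients via $\lambda_k\geq\hat\lambda$. The explicit estimates in your third step are exactly the ones the paper leaves implicit.
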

\begin{proof}
 Applying Lemma~\ref{le:pc} with  $\eta= \sigma_k$  and using \eqref{eq;ec},  we conclude that 
\begin{equation} \label{eq:eus}
\left(1-\frac{\sigma_k}{\lambda_k}\right)d^2(q, p^{k+1})\leq d^2(q, p^{k}) - \left(1-\frac{\sigma_k}{\lambda_k}\right)d^2(p^{k+1}, p^{k})+   \frac{2}{\lambda_k} \epsilon_ k, \qquad k=0,1, \ldots.
\end{equation}
On the other hand,   the forth   inequality in  \eqref{eq:es}  implies that there exists  a  ${\bar k}\in \mathbb{N}$  such that $0<  \sigma_k<  \lambda_k/2$, for   all $k\geq {\bar k}$. Hence,  using \eqref{eq:eus} together  with the first inequality in \eqref{eq:es},   we obtain   the  desired inequality.  \end{proof}
\begin{proposition} \label{pr:pc}
Let $\{p^k\}$ be a sequence  generated by Algorithms~\ref{Alg:InexProxae} or \ref{Alg:InexProx}. Then, the following statement hold:
 \begin{enumerate}
	\item [$(a)$] The sequence $\{d(p^k, q)\}$ converges, for all $q\in S(X,\,\Omega)$;
	\item [$(b)$] The sequence $\{p^k\}$ is bounded;
	\item [$(c)$] $\lim_{k\to \infty} d(p^{k+1}, p^{k})=0.$
\end{enumerate}
\end{proposition}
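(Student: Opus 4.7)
The plan is to apply the quasi-Fejér criterion of Lemma~\ref{le:qfm} to the one-step recursions in Corollaries~\ref{cor:ia1} and \ref{cor:ia2}, handling both algorithms uniformly. Fix $q \in S(X,\Omega)$ (nonempty by assumption A3) and set $\zeta_k := d^2(q, p^k)$. In both cases the corollaries give, for all $k \geq \bar k$,
$$
\zeta_{k+1} \;\leq\; (1 + \gamma_k)\zeta_k \;-\; d^2(p^{k+1}, p^k) \;+\; \beta_k,
$$
where for Algorithm~\ref{Alg:InexProxae} one takes $\gamma_k = 2\theta_k/\hat\lambda$ and $\beta_k = (2/\hat\lambda)(\theta_k + 2\epsilon_k)$, and for Algorithm~\ref{Alg:InexProx} one takes $\gamma_k = 2\sigma_k/\hat\lambda$ and $\beta_k = (4/\hat\lambda)\epsilon_k$. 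In either case, the assumptions \eqref{eq:es} on $\{\theta_k\}$, $\{\sigma_k\}$ and $\{\epsilon_k\}$ immediately yield $\sum_k \gamma_k < \infty$ and $\sum_k \beta_k < \infty$.

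For part $(a)$, I drop the nonpositive term $-d^2(p^{k+1}, p^k)$ to obtain $\zeta_{k+1} \leq (1+\gamma_k)\zeta_k + \beta_k$. Lemma~\ref{le:qfm} then gives convergence of $\{\zeta_k\}$, and since $d(q, p^k) = \sqrt{\zeta_k}$, convergence of $\{d(q, p^k)\}$ follows by continuity of $\sqrt{\cdot}$ on $[0,\infty)$.

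For part $(b)$, fix one such $q$; a convergent sequence of distances is bounded, so $\{p^k\}$ lies in some closed metric ball around $q$, hence is bounded.

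For part $(c)$, I keep the $-d^2(p^{k+1}, p^k)$ term and rearrange:
$$
d^2(p^{k+1}, p^k) \;\leq\; \zeta_k - \zeta_{k+1} \;+\; \gamma_k\, \zeta_k \;+\; \beta_k.
$$
By $(a)$, $\{\zeta_k\}$ is bounded, say $\zeta_k \leq D$ for all $k$, so $\gamma_k \zeta_k \leq D\gamma_k$ is summable. Summing the inequality from $\bar k$ to $N$ telescopes the first two terms into $\zeta_{\bar k} - \zeta_{N+1} \leq \zeta_{\bar k}$, giving
$$
\sum_{k=\bar k}^{N} d^2(p^{k+1}, p^k) \;\leq\; \zeta_{\bar k} \;+\; D \sum_{k=\bar k}^{N} \gamma_k \;+\; \sum_{k=\bar k}^{N} \beta_k,
$$
which stays bounded as $N \to \infty$. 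Hence $\sum_{k} d^2(p^{k+1}, p^k) < \infty$, and in particular $d(p^{k+1}, p^k) \to 0$.

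There is no serious obstacle here; the proof is essentially a bookkeeping exercise in verifying the summability hypotheses of Lemma~\ref{le:qfm} and in noting that boundedness of $\{\zeta_k\}$ (which comes from $(a)$) is exactly what is needed to absorb the $\gamma_k \zeta_k$ correction when telescoping in $(c)$. The only mild subtlety is that I must establish $(a)$ first and reuse it in $(c)$, rather than trying to derive both from a single application of the lemma.
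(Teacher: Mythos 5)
Your proof is correct and follows essentially the same route as the paper: both parts $(a)$ and $(b)$ come from applying Lemma~\ref{le:qfm} to the recursions of Corollaries~\ref{cor:ia1} and \ref{cor:ia2} after discarding the term $-d^2(p^{k+1},p^k)$. The only (harmless) deviation is in part $(c)$: the paper passes directly to the limit using the convergence of $\{d^2(q,p^k)\}$ from $(a)$, whereas you telescope to obtain the slightly stronger conclusion $\sum_k d^2(p^{k+1},p^k)<\infty$; both arguments are valid.
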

\begin{proof}  
If $\{p^k\}$ is   generated by Algorithm ~\ref{Alg:InexProxae},  then Corollary~\ref{cor:ia1} implies that 
$$
 \displaystyle d^2(q, p^{k+1})\leq \left(1+\frac{2\theta_k}{\hat \lambda } \right)d^2(q, p^{k})+    \frac{2}{\hat \lambda } \left[ {\theta_k} +   2 \epsilon_ k\right].
$$
Hence, item~$(a)$ follows by   applying   Lemma~\ref{le:qfm}  with $\gamma_k=2\theta_k/{\hat \lambda }$,  $\beta_k=2[ {\theta_k} +   2 \epsilon_ k]{\hat \lambda}$, $\zeta_{k}=d^2(q, p^{k})$ and $\zeta_{k+1}=d^2(q, p^{k+1})$. On the other hand,  if $\{p^k\}$ is  a sequence   generated by Algorithm~\ref{Alg:InexProx}, then  Corollary~\ref{cor:ia2} gives 
$$
\displaystyle d^2(q, p^{k+1})\leq \left(1+\frac{2\sigma_k}{\hat \lambda } \right)d^2(q, p^{k}) +   \frac{4}{\hat \lambda} \epsilon_ k.
$$
Thus, item~$(a)$ follows by  applying  Lemma~\ref{le:qfm}  with $\gamma_k=2\sigma_k/{\hat \lambda }$, $\beta_k=4\epsilon_k/{\hat \lambda }$, $\zeta_{k}=d^2(q, p^{k})$ and $\zeta_{k+1}=d^2(q, p^{k+1})$.  The item~$(b)$ is an immediate  consequence of  item~$(a)$.  The next task is to prove item~$(c)$.  If $\{p^k\}$ is   generated by Algorithm~\ref{Alg:InexProxae}, then   using  again  Corollary~\ref{cor:ia1} we have 
$$
 \displaystyle d^2(p^{k+1}, p^{k}) \leq   d^2(q, p^{k})-  d^2(q, p^{k+1}) +  \frac{2\theta_k}{\hat \lambda}d^2(q, p^{k})   + \frac{2}{\hat \lambda } \left[ {\theta_k} +   2 \epsilon_ k\right].
$$
Now, note that   \eqref{eq:es} implies   $\lim_{k\to \infty} \sigma_ k=0$ and $\lim_{k\to \infty} \epsilon_ k=0$. Therefore,   item $(a)$  together with the last inequality  imply  item $(c)$.  If $\{p^k\}$ is   generated by Algorithm~\ref{Alg:InexProx}, then it foolows from Corollary~\ref{cor:ia2} that 
$$
d^2(p^{k+1}, p^{k}) \leq  d^2(q, p^{k}) - d^2(q, p^{k+1}) + \frac{2\sigma_k}{\hat \lambda }  d^2(q, p^{k})+   \frac{4}{\hat \lambda} \epsilon_ k.
$$
On the other hand, using \eqref{eq:es}  we have  $\lim_{k\to \infty} \sigma_ k=0$ and $\lim_{k\to \infty} \epsilon_ k=0$. Therefore,   item $(a)$  together with the late  inequality  imply  item $(c)$. 
\end{proof}
\begin{theorem}\label{conv.alg.ii} 
Let $\{p^k\}$ be generated by Algorithms~\ref{Alg:InexProxae} or \ref{Alg:InexProx}. Then,  $\{p^k\}$  converges to a point   $p^*\in S(X,\,\Omega)$.
\end{theorem}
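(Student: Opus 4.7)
The plan is to prove that every cluster point of $\{p^k\}$ belongs to $S(X,\Omega)$, and then combine this with the Fej\'er-type property from Proposition~\ref{pr:pc}$(a)$ to upgrade subsequential convergence into full convergence. First, Proposition~\ref{pr:pc}$(b)$ guarantees boundedness, so by completeness of the Hadamard manifold $M$ the sequence admits at least one cluster point $p^*$. Fix a subsequence $\{p^{k_j}\}$ with $p^{k_j}\to p^*$; because $d(p^{k_j+1},p^{k_j})\to 0$ by item~$(c)$, the shifted subsequence $\{p^{k_j+1}\}$ also converges to $p^*$.

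Next, I would pass to the limit in the relaxed inclusion \eqref{eq:icteef}, namely $u^{k_j+1}:=e^{k_j+1}+\lambda_{k_j}\exp^{-1}_{p^{k_j+1}}p^{k_j}\in(X+N_{\Omega})^{\epsilon_{k_j}}(p^{k_j+1})$. These tangent vectors live in the same fiber $T_{p^{k_j+1}}M$, so no parallel transport is needed to estimate their sum. The norm bound $\|u^{k_j+1}\|\le \|e^{k_j+1}\|+\tilde{\lambda}\,d(p^{k_j+1},p^{k_j})$ shows that $u^{k_j+1}\to 0$: the error term vanishes by \eqref{eq;ecae} or \eqref{eq;ec} together with \eqref{eq:es} and item~$(c)$, and the second term vanishes by~$(c)$ and the upper bound on $\lambda_k$. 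Moreover $\epsilon_{k_j}\to 0$ by the summability in \eqref{eq:es}. Since $X+N_{\Omega}$ is itself a monotone multivalued vector field (sum of two monotone ones), Proposition~\ref{prop.conv.alg.} applies to its enlargements and yields $0\in(X+N_{\Omega})^{0}(p^*)$.

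Now the sum rule must be invoked: by Theorem~\ref{mmsub} applied to $\delta_{\Omega}$ and Proposition~\ref{pr:pif} one has $N_{\Omega}=\partial \delta_{\Omega}$ maximal monotone, and $X$ is maximal monotone with $\mbox{dom}\,X=M$, so the sum $X+N_{\Omega}$ is maximal monotone. The final part of Proposition~\ref{prop.elem.ii} then gives $(X+N_{\Omega})^{0}=X+N_{\Omega}$, hence $0\in X(p^*)+N_{\Omega}(p^*)$, that is, $p^*\in S(X,\Omega)$. Applying Proposition~\ref{pr:pc}$(a)$ with $q=p^*$, the full sequence $\{d(p^k,p^*)\}$ converges; since its subsequence $\{d(p^{k_j},p^*)\}$ tends to $0$, the limit must be $0$, and therefore $p^k\to p^*$.

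The main obstacle is the clean passage to the limit: one must justify that the left-hand side $u^{k_j+1}$ in \eqref{eq:icteef} really tends to zero in a way compatible with the graph-closure statement of Proposition~\ref{prop.conv.alg.} even though the base points vary, and one must be able to identify $(X+N_{\Omega})^{0}$ with $X+N_{\Omega}$, which requires the maximality of the sum in the Hadamard setting. The rest of the argument is a textbook Fej\'er-plus-cluster-point combination made possible by Proposition~\ref{pr:pc}.
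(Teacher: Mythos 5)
Your overall architecture (cluster point, show it solves the inclusion, then use Proposition~\ref{pr:pc}$(a)$ to upgrade to full convergence) matches the paper's, and the Fej\'er endgame is identical. The genuine gap is exactly the one you flag at the end and then try to paper over: the identification $(X+N_{\Omega})^{0}=X+N_{\Omega}$ requires maximal monotonicity of the \emph{sum} $X+N_{\Omega}$, and your justification does not deliver it. Theorem~\ref{mmsub} yields maximality of $\partial f$ only under the hypothesis $\mbox{dom}\,f=M$, which $\delta_{\Omega}$ violates whenever $\Omega\neq M$; so you do not even get maximality of $N_{\Omega}=\partial\delta_{\Omega}$ from the cited result. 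And even if both summands were maximal, the assertion ``sum of maximal monotone is maximal monotone'' is a nontrivial sum theorem requiring a constraint qualification already in Hilbert space, and no Riemannian version of it is stated or proved anywhere in the paper. Lemma~\ref{le:esvi} gives solvability of the regularized inclusion but the paper never converts that into maximality of $X+N_{\Omega}$, so your argument cannot be closed with the available tools.

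The paper avoids this entirely by \emph{not} lumping the two fields together. It keeps the inclusion in split form: there exist $u^{k_j+1}\in X^{\epsilon_{k_j}}(p^{k_j+1})$ with $e^{k_j+1}+\lambda_{k_j}\exp^{-1}_{p^{k_j+1}}p^{k_j}-u^{k_j+1}\in N_{\Omega}(p^{k_j+1})$. Since the enlargement $X^{\bar\epsilon}$ is bounded on bounded sets (Proposition~\ref{prop.boun.boun.}, which is where maximality and full domain of $X$ alone are used), one extracts $u^{k_j+1}\to\bar u$; Proposition~\ref{prop.conv.alg.} together with Proposition~\ref{prop.elem.ii} gives $\bar u\in X^{0}(\bar p)=X(\bar p)$, needing only maximality of $X$ itself, and Proposition~\ref{eq:nccc} (graph-closedness of the normal cone) gives $-\bar u\in N_{\Omega}(\bar p)$ in the limit. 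Adding these yields $0\in X(\bar p)+N_{\Omega}(\bar p)$ with no sum theorem. Note also that in the split approach one cannot conclude $u^{k_j+1}\to 0$ (only that the combined vector tends to $0$), which is why the compactness step via Proposition~\ref{prop.boun.boun.} is essential; your claim that the relevant vector tends to $0$ is correct for the lumped vector but is precisely the quantity you can no longer decouple afterwards. To repair your proof you would either have to prove a Riemannian sum-maximality theorem or revert to the paper's splitting.
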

\begin{proof}
Since $\{p^k\}\subset \Omega$ and $\Omega$ is closed, item $(b)$ of  Proposition~\ref{pr:pc} implies that there exists ${\bar p}\in \Omega$ a cluster point of $\{p^{k}\}$. Let    $\{p^{k_j}\}$ be  a subsequence of $\{p^k\}$ such that $\lim_{j\to \infty}p^{k_j}={\bar p}$.  Our first aim is to prove that ${\bar p}\in S(X,\,\Omega)$.  For that,   using   inclusion \eqref{eq:ineq} or \eqref{eq:ineqae}, there exist $u^{k_j+1}\in X^{\epsilon_{k_j}}(p^{k_j+1})$  such that  
\begin{equation} \label{eq:isss}
e^{k_j+1}+ \lambda_{k_j} \exp^{-1}_{p^{k_j+1}}p^{k_j}- u^{k_j+1}   \in N_{\Omega}(p^{k_j+1}), \qquad j=0,1, \ldots.
\end{equation}
On the other hand,    item $(c)$ of  Proposition~\ref{pr:pc} implies  that  $\lim_{j\to \infty}p^{k_j+1}={\bar p}$. Moreover,  considering that $\{\theta_k\}$ and  $\{\sigma_ k\}$ are  bounded, it follows from  \eqref{eq;ecae}, respectively \eqref{eq;ec},  and   item (c) of Proposition~\ref{pr:pc} that $\lim_{k\to \infty} e^k=0$.  Letting  ${\bar \epsilon}=\sup_k\epsilon_k$, the first part   of  Proposition~\ref{prop.elem.X}  implies that  $u^{k_j+1}\in X^{\epsilon_{k_j}}(p^{k_j+1})\subset X^{\bar \epsilon}(p^{k_j+1})$, for all  $j=0,1, \ldots$. Thus,  considering that $\{p^k\}$ is bounded, we conclude from   Proposition~\ref{prop.boun.boun.} that $\{u^{k_j+1}\}$ is also bounded. Without loss of generality we assume that $\lim_{j\to \infty}u^{k_j+1}={\bar u}$. Hence, taking into account that  $\lim_{k\to \infty} \epsilon_ k=0$, $\lim_{j\to \infty}p^{k_j+1}={\bar p}$ and  $u^{k_j+1}\in X^{\epsilon_{k_j}}(p^{k_j+1})$, for all  $j=0,1, \ldots$,    it follows from  Proposition~\ref{prop.elem.ii} and  Proposition~\ref{prop.conv.alg.} that  ${\bar u}\in X^{0}({\bar p})= X({\bar p})$. Therefore,  taking limit in  \eqref{eq:isss} and considering Proposition~\ref{eq:nccc} we conclude that 
$-{\bar u}   \in N_{\Omega}({\bar p})$. Due to ${\bar u}\in  X({\bar p})$  we have $0\in X({\bar p}) + N_{\Omega}({\bar p})$, which implies that  $\bar{p}\in S(X,\,\Omega)$. Moreover, using  item $(a)$ of  Proposition~\ref{pr:pc} que obtain  that  the sequence $\{d(p^k, {\bar p})\}$ converges.  Considering that $\lim_{j\to \infty}p^{k_j}={\bar p}$, we have   $\lim_{k\to \infty} d(p^{k_j}, {\bar p})=0$.  Therefore, we conclude that  $\lim_{k\to \infty} d(p^{k}, {\bar p})=0$, or equivalently, $\lim_{k\to \infty} p^{k}= {\bar p}$, which concludes the proof. 
 \end{proof}
\begin{remark}
In \cite{WangLiLopezYao2015, WangLiChongGenaro2016} is  presented an inexact version of the proximal point method for to find singularity of a vector field  on Hadamard manifolds. These papers differs from the present paper in two ways, namely,  \cite{WangLiLopezYao2015, WangLiChongGenaro2016}  use only  absolute summable error criteria and the enlargement  $X^{\epsilon} $ of  $X$ was not considered.  It is worth noting that,  the enlargement  $X^{\epsilon} $  is an (outer) approximation to $X$.  Consequently,    even in the linear setting,   the  proximal subproblem using the  enlargement    has the advantage of providing more latitude and more robustness to the methods used for solving it; see   \cite{BurachikIusemSvaiter1997, BurachikIusem2008}.
\end{remark}
\section{Applications} \label{sec:appl}
The general Problem~\eqref{eq.vip} has as particular instances  the optimization problem,  equilibrium problem and nonlinear optimization problem. The aim of this section is  to apply the results obtained in the previous section to these  particular instances.  For  each problem studied,   a version of the  Algorithm~\ref{Alg:InexProx} is  stated  to  solve it. Since  a  version of the Algorithm~\ref{Alg:InexProxae}   can be stated following the same idea, it  will  be omitted.
\subsection{ Inexact proximal point method for optimization} \label{sec4}
In this section, we apply the results of the previous section   to  obtain an inexact proximal  point method for the constrained optimization problems in Hadamard manifolds.  Given a closed and convex set $\Omega \subset M$ and a  convex function $f:M \rightarrow \mathbb{R}$,  the {\it constrained optimization problem}  consists of
\begin{equation}  \label{eq.cop}
 \min  ~ f(p) , \qquad  ~ p\in\Omega.
\end{equation}
 The  problem in \eqref{eq.cop} is equivalently stated as follows 
\begin{eqnarray} \label{eq:eqpr}
 \min  ~ (f+\delta_{\Omega})(p), \qquad   ~ p\in M.
\end{eqnarray}
where $\delta_\Omega$ is the  indicate functionof $\Omega$. Hereafter,    $S(f,\Omega)$ denotes    the solution  set of  the problem in \eqref{eq.cop}.  It well know that \eqref{eq:eqpr} can be stated  as the variational inequality problem~\eqref{eq.vip}.  In fact, first note that  due to convexity of  the set $\Omega$ and of the function  $f$ we conclude that   $f+\delta_{\Omega}$  is also convex. Thus, by using  Proposition~\ref{pr:pif} we have   
$$
 \partial (f+\delta_{\Omega})(p)=\partial f(p)+N_{\Omega}(p), \qquad  p\in \Omega.
$$
Therefore,    $p^*\in S(f,\Omega)$ if,  and  only if,~    $0\in \partial f(p^*)+N_{\Omega}(p^*)$. Therefore, \eqref{eq:eqpr}  is equivalent  to find an $p^*\in\Omega$ satisfying  the inclusion
\begin{equation} \label{eq:efvipf}
0\in \partial f(p)+N_{\Omega}(p).
\end{equation} 
In order to present a version of Algorithm~\ref{Alg:InexProx} to solve \eqref{eq.cop} or equivalently  \eqref{eq:efvipf}, we need to consider the   {\it enlargement of the subdifferential of} $f$,   denoted  by  $\partial^{\epsilon}f: M   \rightrightarrows  TM $, which    is defined by
$$
 \partial^{\epsilon} f(p):=\left\{ u\in T_pM~:~  \left\langle \mbox{P}_{qp}^{-1} u-v, \,  \exp_{q}^{-1}p\right\rangle \geq -\epsilon, ~ q\in M, ~  v\in \partial f(q) \right\},  \qquad \epsilon\geq 0.
$$
{\it To state the   version of Algorithm~\ref{Alg:InexProx} to solve \eqref{eq.cop} or equivalently \eqref{eq:efvipf},  take  three exogenous  sequences   of nonnegative real  numbers}, $\{\lambda_k\}$,   $\{\epsilon_k\}$ and $\{\sigma_k\}$ satisfying \eqref{eq:es}. 
Then, the  inexact proximal point method  for the optimization problem \eqref{eq:efvipf}  is introduced as follows:\\

 \vspace{.2cm}
\hrule
\begin{algorithm} \label{Alg:InexProxOp}
 {\bf Inexact proximal point method for optimization problems}  \\
\hrule
\begin{description}
\item[0.] Take $\{\lambda_k\}$,   $\{\epsilon_k\}$ and $\{\sigma_k\}$ satisfying \eqref{eq:es} and   $p^0 \in  \Omega$. Set $k=0$.
\item [1.] Given $p^{k} \in \Omega$, compute   $p^{k+1} \in \Omega$  and  $e^{k+1}\in T_{p^{k+1}}M$ such that
\begin{equation} \label{eq.pia.iif}
e^{k+1} \in \partial^{\epsilon_k}f(p^{k+1})+N_{\Omega}(p^{k+1})-2\lambda_k\exp^{-1}_{p^{k+1}}x^k, 
\end{equation} 
\begin{equation} \label{eq;eciif}
\|e^{k+1}\| \leq \sigma_ {k}d(p^{k}, p^{k+1}), 
\end{equation}
\item[ 2.] If  $p^{k}=p^{k+1}$, then {\bf stop}; otherwise,   set $k\gets k+1$, and go to step~{\bf  1}.
\end{description}
\hrule
\end{algorithm}
\vspace{.2cm}

\begin{remark}
In case, $\epsilon_k\equiv 0$, $e^{k+1}\equiv 0$ and  $\Omega=M$,   the Algorithm~\ref{Alg:InexProxOp} generalize  the algorithm proposed by  Ferreira and Oliveira~\cite{FerreiraOliveira2002},   and  the method (5.15) of  Chong Li et. al. \cite{LiLopesMartin-Marquez2009}. For  $\epsilon_k=0$, inexact variations of \eqref{eq.pia.iif} with absolute erros  can be found in \cite{WangLiLopezYao2015} and \cite{TangHuang2013} for relative erro. Finally, letting    $e^{k+1}\equiv 0$, the Algorithm~\ref{Alg:InexProxOp} retrieves   the one presented in \cite{BatistaGlaydstonFerreira2016}. 
\end{remark}
In the following we state  a convergence result for the sequence generated by  \eqref{eq.pia.iif} and \eqref{eq;eciif}. First  note that, for  considering that   $\mbox{dom}f=M$,   Theorem~\ref{mmsub} implies that  $\partial f$ is maximal monotone. Hence, from Proposition~\ref{pr:pif}  we have   $N_{\Omega}= \partial \delta_{\Omega}$. Therefore,     by applying Theorems~\ref{th:wdef}  and \ref{conv.alg.ii} with $X=\partial f$ we obtain  the following theorem.
\begin{theorem} \label{conv.alg.}
Assume that $S(f,\,\Omega)\neq \varnothing$. Then, the sequence $\{p^k\}$ generated by \eqref{eq.pia.iif} and \eqref{eq;eciif}   is well defined and  converges to a point   $p^*\in S(f,\,\Omega)$.
\end{theorem}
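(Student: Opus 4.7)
The plan is to derive this theorem as a direct application of Theorems~\ref{th:wdef} and \ref{conv.alg.ii} by setting $X = \partial f$ and verifying that the three standing assumptions \textbf{A1}--\textbf{A3} transfer to this specialized setting. Since Algorithm~\ref{Alg:InexProxOp} is literally Algorithm~\ref{Alg:InexProx} with the vector field $X$ replaced by $\partial f$ and the inclusion \eqref{eq.pia.iif}--\eqref{eq;eciif} is exactly the specialization of \eqref{eq:ineq}--\eqref{eq;ec}, the only real work is to check the hypotheses of the abstract convergence result.

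First I would verify assumption \textbf{A1}. Because $f:M\to\mathbb{R}$ is finite-valued on the whole manifold, we have $\mathrm{dom}\,f=M$, and the convexity of $f$ implies $\mathrm{dom}\,\partial f=M$ as well. The requirement that $\Omega$ be closed and convex is part of the hypothesis of problem \eqref{eq.cop}. Next I would establish \textbf{A2}: since $f$ is proper, lower semicontinuous, convex, and $\mathrm{dom}\,f=M$, Theorem~\ref{mmsub} gives that $\partial f$ is a maximal monotone multivalued vector field. This is the cleanest step since both hypotheses of that theorem are met immediately.

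The slightly more delicate step is translating \textbf{A3}, namely showing $S(\partial f,\Omega)\neq\varnothing$ whenever $S(f,\Omega)\neq\varnothing$. For this I would invoke the characterization already derived in the paragraph preceding the theorem: convexity of $\Omega$ and of $f$ together with Proposition~\ref{pr:pif} give $\partial(f+\delta_{\Omega})(p)=\partial f(p)+N_{\Omega}(p)$ for $p\in\Omega$, and a standard minimality argument produces $p^*\in S(f,\Omega)$ if and only if $0\in\partial f(p^*)+N_{\Omega}(p^*)$, i.e., $p^*\in S(\partial f,\Omega)$. Consequently $S(\partial f,\Omega)=S(f,\Omega)\neq\varnothing$, confirming \textbf{A3}.

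With \textbf{A1}--\textbf{A3} in force for $X=\partial f$, I would first invoke Theorem~\ref{th:wdef} to conclude that the sequence $\{p^k\}$ generated by \eqref{eq.pia.iif}--\eqref{eq;eciif} is well defined, and then Theorem~\ref{conv.alg.ii} to conclude that it converges to some $p^*\in S(\partial f,\Omega)=S(f,\Omega)$. The main potential obstacle is ensuring that the specialized formula \eqref{eq.pia.iif} (which uses the enlargement $\partial^{\epsilon_k}f$ and the normal cone $N_{\Omega}$ separately) is indeed a special case of the abstract inclusion \eqref{eq:ineq}; this is immediate once one observes that $\partial^{\epsilon_k}f$ is precisely the enlargement of the monotone field $\partial f$ in the sense of Definition~\ref{def.enl.X}, so no reformulation is needed and the application is direct.
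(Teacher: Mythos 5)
Your proposal is correct and follows essentially the same route as the paper: the paper likewise verifies that $\partial f$ is maximal monotone via Theorem~\ref{mmsub} (using $\mathrm{dom}\,f=M$), identifies $N_{\Omega}=\partial\delta_{\Omega}$ through Proposition~\ref{pr:pif}, and then applies Theorems~\ref{th:wdef} and \ref{conv.alg.ii} with $X=\partial f$. Your explicit check of \textbf{A3} via the equivalence $S(f,\Omega)=S(\partial f,\Omega)$ is only slightly more detailed than the paper's one-line reduction, but it is the same argument.
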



In the next remark, we highlight the advantage of using the enlargement of the subdifferential  instead of the $\epsilon$-subdifferential of $f$.

\begin{remark}
The {\it $\epsilon$-subdifferential} of $f$, denoted by  $\partial_{\epsilon}f : M   \rightrightarrows  TM$, is given by
$$
\partial_{\epsilon}f(p):=\left\{u\in T_pM ~ : ~ f(q) \geq f(p)+\left\langle u,  \exp^{-1}_pq\right\rangle - \epsilon, ~  q\in M\right\}, \qquad \epsilon\geq 0.
$$
As can be seen in \cite{BatistaGlaydstonFerreira2016}, the enlargement of the subdifferential of $f$  is bigger than its $\epsilon$-subdifferential, i.e., for each  $p\in M$, there holds $\partial_\epsilon f(p)\subseteq \partial^\epsilon f(p)$. 
Taking into account that this inclusion may be strict, we can to state that the iteration in \eqref{eq.pia.iif} using the enlargement of $\partial f(\cdot)$ has the advantages of providing more latitude and more robustness than a method using the $\epsilon$-{subdifferential} of $f$.
\end{remark}
\subsection{Inexact proximal point method for equilibrium problems} \label{Sec:AppEP}
In this section, by using  the results of Section~\ref{Sec:InProxVIP},  we present  a version of the  inexact proximal point method for equilibrium problems  in Hadamard manifolds. For that we need some preliminaries. Let $C\subset M$ be a nonempty, closed and convex set and $F:M \times M \to \R$ be a bifunction satisfying  the following standard assumptions:
\begin{enumerate} 
\item [{\bf H1.}] $F(\cdot,y): M \rightarrow \R$ is upper semicontinuous for all $y \in M;$
\item [{\bf H2.}] $F(x,\cdot): M \rightarrow \R$ is  convex, for all $x \in M.$;
\item [{\bf H3.}] $F$ is  monotone on ${C}$, i.e.,  $ F(x,y)+F(y,x)\leq 0$,  for all $x,y \in {C}$;
\item [{\bf H4.}]  $F(x,x)=0,$ for all $x\in {C}.$
\end{enumerate}
The {\it equilibrium problem}  associated to the  set $C$ and the bifunction $F$, denoted by $\mbox{EP}(F,{C})$, is stated as follows:  Find $x^* \in {C} $ such that 
\begin{equation}\label{eq:epave}
F(x^*,y)\geq 0,\qquad  y\in {C}.
\end{equation}
Denote by $S(F, {C})$ the solution set of the $\mbox{EP}(F,{C})$. By using \cite[Proposition 3.3]{Li2019} we obtain that    \eqref{eq:epave}  is equivalent  to find an $p^*\in\Omega$ satisfying  the inclusion
\begin{equation} \label{eq:epvii}
0\in \partial_{2} F(p, \cdot)(p)+N_{\Omega}(p), 
\end{equation}
where $\partial_{2}F(p, \cdot)$ denotes the subdifferential of $F$ with respect to  the second argument. 
We also assume that 
\begin{enumerate} 
\item [{\bf H5.}] The set $S(F, C)$ is nonempty.
\end{enumerate}
\begin{remark}
Assumptions {\bf H1}-{\bf H4} are standard for the study of equilibrium problems in linear spaces, see \cite{iusem20031,iusem2009,Iusem2010}. It is worth to notting that assumption {\bf H5} can be reached under suitable condition on the set $C$ or the bifunction $F$; papers addressing this issue include, but are not limited to,   \cite{Batista2015, konnov2005, CLMM2012,Li2019,  ZhouHuang2019}. To the best of our knowledge, our approach brings a first proposal of an inexact proximal method for equilibrium problems. It is worth noting that an exact version has been first introduced in \cite{CLMM2012} and, by using variational inequality theory, reaffirmed for genuine Hadamard manifolds in \cite{Li2019}.
\end{remark}

Before presenting  a version of Algorithm~\ref{Alg:InexProx} to solve \eqref{eq:epave}, or equivalently \eqref{eq:epvii},  we  need to consider the   {\it enlargement of the subdifferential of} $F$    with respect to  the second argument, denoted  by  $\partial_{2}^{\epsilon}F(z, \cdot): M   \rightrightarrows  TM $, for each fixed $z \in {C}$, which    is introduced  as follows 
\begin{equation} \label{eq:edsa}
\partial_{2}^{\epsilon}F(z,x):=\left\{w \in T_{p}M:~ F(z,y)\geq F(z,x)+ \left \langle w, \exp^{-1}_x y \right \rangle-\epsilon, ~y \in M\right\}.
\end{equation} 
{\it To state the   version of Algorithm~\ref{Alg:InexProx} to solve the equilibrium problem~\eqref{eq:epave} or equivalently  \eqref{eq:epvii},  take  three exogenous  sequences   of nonnegative real  numbers}, $\{\lambda_k\}$,   $\{\epsilon_k\}$ and $\{\sigma_k\}$ satisfying \eqref{eq:es}. In case, the inexact proximal point method  for solving  \eqref{eq:epave} is introduced as follows:\\

\vspace{.2 cm}
\hrule
\begin{algorithm} \label{Alg:ProxEP}
 {\bf Inexact proximal point method for equilibrium problems}  \\
\hrule
\begin{description}
\item[0.] Take $\{\lambda_k\}$,   $\{\epsilon_k\}$ and $\{\sigma_k\}$ satisfying \eqref{eq:es},    $x^0 \in  {C}$ and $\sigma>1$. Set $k=1$.
\item [1.] Given $x^{k} \in {C}$, compute   $x^{k+1} \in {C}$  and  $e^k \in T_{x^k}M$ such that
\begin{equation} \label{eqn03}
 e^{k+1} \in \partial_{2}^{\epsilon_k}F(x^{k+1},x^{k+1}) + N_{\Omega}(x^{k+1}) -\lambda_k  {\rm exp}_{x^{k+1}}^{-1}x^{k},
 \end{equation}
\begin{equation} \label{eqn05}
\|e^{k+1}\| \leq \sigma_ {k}d(p^{k}, p^{k+1}).
\end{equation}
\item[ 2.] If  $x^{k-1}=x^k$ or $x^k\in  $, then {\bf stop}; otherwise,   set $k\gets k+1$, and go to step~{\bf  1}.
\end{description}
\hrule
\end{algorithm}
\vspace{.2 cm}

\begin{remark}
The  Algorithm~\ref{Alg:ProxEP}  can be seen as an inexact version  of  the   following  iterative scheme  considered in \cite{CLMM2012}: For $x^{k} \in {C}$,  compute $x^{k+1} \in {C}$ such that 
\begin{equation} \label{eq1}
F(x^{k+1},x)-\lambda_k \la {\rm exp}_{x^{k+1}}^{-1}x^{k}, {\rm exp}_{x^{k+1}}^{-1}x  \ra\geq 0, \qquad   x\in {C}. 
\end{equation}
Indeed,    given   $x^{k}$ and  $x^{k+1} \in {C} $  satisfying \eqref{eq1} we have  
\begin{equation} \label{eq1ax}
F(x^{k+1},x)+ \delta_{C}(x) -\left(F(x^{k+1},x^{k+1})+ \delta_{C}(x^{k+1}) + \left\la \lambda_k {\rm exp}_{x^{k+1}}^{-1}x^{k}, {\rm exp}_{x^{k+1}}^{-1}x  \right\ra \right) \geq 0,  
\end{equation}
for all $ x\in M$.  Since the function $p\mapsto (F(p^{k+1},\cdot) +  \delta_{C}(\cdot))(p)$  is convex, it follows from the definition of the subdifferential  
 that $\lambda_k {\rm exp}_{x^{k+}}^{-1}x^{k} \in  \partial_2 (F(x^{k+1},\cdot) +  \delta_{C}(\cdot) ) (x^{k+1})$. Hence, by using Proposition~\ref{pr:pif} we obtain 
$$
  0\in \partial_2 F(x^{k+1},x^{k+1})- \lambda_k {\rm exp}_{x^{k+1}}^{-1}x^{k} +  N_{\Omega}(x^{k+1}), 
$$
which implies that    $x^{k}$  and   $x^{k+1} $ also  satisfy \eqref{eqn03} and \eqref{eqn05} with $e^{k+1}=0$ and  $\epsilon_k=0$.
\end{remark}
In the following we state  a convergence result for the sequence generated by  \eqref{eqn03} and \eqref{eqn05}. First  note that, for  considering that   $\mbox{dom}F(p, \cdot)=M$,     Theorem~\ref{mmsub} implies that  $\partial_{2} F(p, \cdot)$ is maximal monotone, for all $p\in M$. Moreover, Proposition~\ref{pr:pif}  implies that    $N_{\Omega}= \partial \delta_{\Omega}$. Therefore,     by applying Theorems~\ref{th:wdef}  and \ref{conv.alg.ii} with $X=\partial_{2} F(p, \cdot)$ we obtain  the following theorem.
\begin{theorem} \label{conv.alg.}
The sequence $\{p^k\}$ generated by  \eqref{eqn03} and \eqref{eqn05}   is well defined and  converges to a point   $p^*\in S(f,\,\Omega)$.
\end{theorem}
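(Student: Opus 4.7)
The strategy is to recast the iterative scheme \eqref{eqn03}--\eqref{eqn05} as an instance of Algorithm~\ref{Alg:InexProx} applied to a suitably chosen maximal monotone vector field, and then to invoke Theorems~\ref{th:wdef} and \ref{conv.alg.ii}. Concretely, I would define $X : M \rightrightarrows TM$ by $X(p) := \partial_2 F(p,\cdot)(p)$ and take $\Omega = C$. By the equivalence \eqref{eq:epvii} (resting on \cite[Proposition 3.3]{Li2019}), the equilibrium problem is equivalent to the inclusion $0 \in X(p) + N_C(p)$, so $S(X,C) = S(F,C)$ and hence H5 supplies A3. The set $C$ is closed and convex by standing hypothesis, and the full-domain convexity of $F(p,\cdot)$ together with Theorem~\ref{mmsub} gives $\mbox{dom}\,X = M$, so A1 holds.

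The main obstacle is verifying A2, namely maximal monotonicity of the whole vector field $X$ on $M$. Monotonicity is a one-line computation: combining the two subdifferential inequalities for $u \in X(p)$ and $v \in X(q)$, evaluated respectively at $y = q$ and $y = p$, with $F(p,p) = F(q,q) = 0$ (H4) and then adding and applying H3, yields $\langle P_{qp}^{-1} u - v, \exp_q^{-1} p \rangle \geq 0$. Maximality is genuinely harder and does not follow from Theorem~\ref{mmsub} (which only gives maximality of $\partial_2 F(p,\cdot)$ with $p$ held fixed); it requires an equilibrium-problem KKM/minimax argument that exploits the upper semicontinuity H1 in addition to H2--H4. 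I would quote this fact from \cite{CLMM2012, Li2019}, where it is established precisely in the Hadamard setting.

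With A1--A3 in place, the remaining task is to reconcile the two notions of enlargement. The scheme \eqref{eqn03} employs the $\epsilon_k$-subdifferential $\partial_2^{\epsilon_k} F(x^{k+1}, \cdot)(x^{k+1})$ from \eqref{eq:edsa}, whereas Algorithm~\ref{Alg:InexProx} demands membership in the enlargement $X^{\epsilon_k}(x^{k+1})$ from Definition~\ref{def.enl.X}. I would establish the pointwise inclusion $\partial_2^{\epsilon_k} F(p,\cdot)(p) \subseteq X^{\epsilon_k}(p)$ by a short chain: given $w$ in the left-hand side, the $\epsilon_k$-subdifferential inequality at $y = q$, combined with $F(p,p) = 0$, the standard isometry identity $\langle w, \exp_p^{-1} q \rangle = -\langle P_{qp}^{-1} w, \exp_q^{-1} p\rangle$, the monotonicity H3, and the subdifferential inequality for $v \in X(q)$ evaluated at $y = p$, produce exactly $\langle P_{qp}^{-1} w - v, \exp_q^{-1} p \rangle \geq -\epsilon_k$. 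With this inclusion, \eqref{eqn03} implies \eqref{eq:ineq} for the data $(X,C)$ and \eqref{eqn05} is literally \eqref{eq;ec}; therefore $\{x^k\}$ is a sequence produced by Algorithm~\ref{Alg:InexProx}, and Theorems~\ref{th:wdef} and \ref{conv.alg.ii} respectively deliver well-definedness and convergence of $\{x^k\}$ to some $p^* \in S(X,C) = S(F,C)$, which is the conclusion (reading the $S(f,\Omega)$ of the stated theorem in its intended sense).
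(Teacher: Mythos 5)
Your proof is correct and follows the same overall strategy as the paper --- recast \eqref{eqn03}--\eqref{eqn05} as an instance of Algorithm~\ref{Alg:InexProx} for the diagonal field $X(p)=\partial_2 F(p,\cdot)(p)$ on $\Omega=C$ and invoke Theorems~\ref{th:wdef} and \ref{conv.alg.ii} --- but it is substantially more careful than the paper's own three-sentence argument, and the extra care is not cosmetic. The paper simply asserts that Theorem~\ref{mmsub} gives maximal monotonicity of ``$\partial_2 F(p,\cdot)$''; as you correctly observe, that theorem only yields maximality of the subdifferential of the convex function $F(p,\cdot)$ with $p$ frozen, whereas assumption \textbf{A2} concerns the diagonal field, whose monotonicity needs \textbf{H3}--\textbf{H4} (your one-line computation) and whose maximality must be imported from \cite{CLMM2012,Li2019}. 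Likewise, the paper never addresses the fact that \eqref{eqn03} is stated with the $\epsilon$-subdifferential-type object \eqref{eq:edsa}, while the convergence machinery (Definition~\ref{def.enl.X} and Propositions~\ref{prop.elem.X}--\ref{prop.boun.boun.}) is built on the enlargement $X^{\epsilon}$ of the diagonal field; your pointwise inclusion $\partial_2^{\epsilon_k}F(p,\cdot)(p)\subseteq X^{\epsilon_k}(p)$, proved via \textbf{H3} and the isometry of parallel transport, is exactly the bridge needed to make the reduction legitimate (it is the bifunction analogue of the inclusion $\partial_\epsilon f\subseteq\partial^\epsilon f$ recorded at the end of Section~4.1). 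In short, your proposal supplies the two verifications the paper elides; what the paper's terser version buys is brevity, at the price of leaving \textbf{A2} and the compatibility of the two enlargement notions unjustified.
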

\begin{proof}
First  note that, for  considering that   $\mbox{dom}F(p, \cdot)=M$,     Theorem~\ref{mmsub} implies that  $\partial_{2} F(p, \cdot)$ is maximal monotone, for all $p\in M$. Moreover, Proposition~\ref{pr:pif}  implies that    $N_{\Omega}= \partial \delta_{\Omega}$. Therefore,     by applying Theorems~\ref{th:wdef}  and \ref{conv.alg.ii} with $X=\partial_{2} F(p, \cdot)$ we obtain  the following theorem.
\end{proof}
\subsection{ Inexact proximal point method for nonlinear optimization problem} \label{sec4}
In this section, we apply the results of the previous section to  obtain an inexact proximal  point method for the  nonlinear optimization problem  in the form
\begin{equation}  \label{eq:nlp}
 \min  ~ f(p) , \qquad  \quad ~ p\in \left\{p\in M: ~g(p)\leq 0, ~h(p)=0\right\}, 
\end{equation}
where $M$ is a Hadamard manifol,  the objective function  $f:M \rightarrow \mathbb{R}$  and the constraint functions   $g=(g_1, \ldots, g_m):M \rightarrow \mathbb{R}^m$ and $h=(h_1, \ldots, h_\ell):M \rightarrow \mathbb{R}^\ell$ are assumed to be continuously  differentiable and convex. In order to state the problem \eqref{eq:nlp} as  the variational inequality problem in   \eqref{eq.vip}, we first recall the  first-order necessary optimality conditions  in Karush-Kuhn-Tucker (KKT) form. It is worth noting that recently the KKT conditions were addressed in \cite{BergmannHerzog2019}. Let  ${\cal L}: M\times \mathbb{R}_{+}^m \times \mathbb{R}^\ell \to \mathbb{R}$  be the   Lagrangian associated with \eqref{eq:nlp} defined by 
\begin{equation}  \label{eq:Lagrang}
{\cal L} (p, \mu, \lambda):=f(p) +\sum_{i=1}^{m}\mu_ig_i(p)+ \sum_{j=1}^{\ell}\lambda_jh_j(p).
\end{equation}
Since the functions $f$,   $g$  and $h$ are  continuously  differentiable, it follows from \eqref{eq:Lagrang} that  KKT conditions are given by 
\begin{align}  
\grad_ p{\cal L} (p, \mu, \lambda):=\grad f(p) +\sum_{i=1}^{m}\mu_i\grad g_i(p)+ \sum_{j=1}^{\ell}\lambda_j\grad h_j(p)&=0\label{eq:kkk1}\\
                                                                                                                                                                                     g_i(p)&\leq0, \quad i=1, \ldots m \label{eq:kkk2}\\                                                                                                                                                                                                                                                                                                                                                                                                                                                                                                                                                                                                                                                                                                                                                                                                                                                                                                                  
                                                                                                                                                                                     h_j(p)&=0, \quad j=1, \ldots \ell  \label{eq:kkk3}\\
                                                                                                                                                                                    \mu_i g_i(p)&=0, \quad i=1, \ldots \label{eq:kkk4}\\
                                                                                                                                                                                     \mu_i &\geq 0, \quad i=1, \ldots m \label{eq:kkk4}
\end{align}
Let ${\widetilde M}:=M\times \mathbb{R}^m \times \mathbb{R}^\ell$ be  the product manifold with the induced  product metric, for more details see \cite{Sakai1996}.  Then,  the tangent plane at ${\tilde p}:=(p, \mu, \lambda)\in {\widetilde M}$  is  $T_{\tilde p}{\widetilde M}:=T_{p}M\times \mathbb{R}^m\times \mathbb{R}^\ell$ and  the exponential map ${\widetilde{\rm exp}}_{\tilde p}: T_{\tilde p}{\widetilde M} \to {\widetilde M}$  is given by 
$$
{\widetilde{\rm exp}}_{\tilde p}{\tilde w}:= \left(\exp_{p}{w}, ~ \mu+u,~ \lambda+v\right), \qquad \quad {\tilde w}:=(w, u, v)\in T_{{\tilde p}}{\widetilde M}.
$$
where $\exp_{p}$ is the exponential map of $M$ a $p\in M$. Consequently, the inverse of ${\widetilde{\rm exp}}_{\tilde p}$  is given by 
$$
{\widetilde{\rm exp}}_{\tilde p}^{-1}{\tilde q}:= \left(\exp_{p}^{-1}{q}, ~ {\nu}-\mu,~  {\zeta}-\lambda\right), \qquad \quad {\tilde q}:=({q}, { \nu}, {\zeta})\in {\widetilde M}.
$$
Let    ${\tilde \Omega}:=M\times \mathbb{R}_{+}^m\times \mathbb{R}^\ell\subset {\widetilde M}$, which is  convex set in $ {\widetilde M}$. In this case,  the   {\it normal cone} of the set  ${\tilde \Omega}$ at a point ${\tilde p}\in \Omega$  is given by
$$
N_{\tilde \Omega}({\tilde p}):=\left\{ {\tilde w}\in T_{\tilde p}{\widetilde M}:~\left\langle{\tilde w}, {\widetilde{\rm exp}}_{\tilde p}^{-1}{\tilde q} \right\rangle\leq 0,  ~ {\tilde q}\in \Omega \right\}.
$$
Since  the functions $f$,   $g$  and $h$ are  continuously  differentiable and convex,  the definition \eqref{eq:Lagrang} implies that  the vector field    $X: M\times \mathbb{R}_{+}^m\times \mathbb{R}^\ell \to TM\times \mathbb{R}^m\times \mathbb{R}^\ell$ defined by
\begin{equation} \label{eq:vfala}
X({\tilde p}):=\begin{bmatrix}
\grad_ p{\cal L} ({\tilde p})\\
-g(p)\\
h(p)
\end{bmatrix} \in  T_{\tilde p}{\widetilde M}, \qquad {\tilde p}:=(p, \mu, \lambda)\in {\widetilde M}.
\end{equation} 
is maximal monotone.  Moreover, using  the definition of normal cone of the set ${\tilde \Omega}$ we conclude that \eqref{eq:kkk1}-\eqref{eq:kkk4}  is equivalent to 
\begin{equation}  \label{eq:VIPnlp}
0\in X({\tilde p}) + N_{\tilde \Omega}({\tilde p}), \qquad {\tilde p}:=(p, \mu, \lambda)\in {\widetilde M}.
\end{equation}
To  the definition of enlargement of  $X$,  consider  the parallel transport on ${\widetilde M}$ from ${\tilde p}$ to  ${\tilde q}$  as being 
$$
{\tilde P}_{{\tilde p}{\tilde q}} {\tilde w}:= \left( {P}_{pq}{w}, u, v  \right), \qquad {\tilde w}:=(w, u, v)\in T_{{\tilde p}}{\widetilde M}.
$$
where $ {P}_{pq}$ is the  parallel transport on ${ M}$ from ${p}$ to  ${ q}$. Then,  the enlargement of  vector field  $X^{\epsilon}: M   \rightrightarrows  TM $  associated to  $X$   is defined by
\begin{equation} \label{enl.X}
    X^{\epsilon}({\tilde p}):=\left\{ {\tilde w}:=(w, u, v)\in T_{{\tilde p}}{\widetilde M}:~ \left\langle {\tilde P}_{{\tilde q}{\tilde p}}^{-1}{\tilde w}- {\tilde z}, \widetilde{\rm exp}_{\tilde q}^{-1}{\tilde p} \right\rangle \geq  -\epsilon, ~  {\tilde q}\in M, ~  {\tilde z}\in X({\tilde q}) \right\}, 
\end{equation}
for all $ {\tilde p}:=(p, \mu, \lambda)\in {\widetilde M}$.  Finally,   taking   three exogenous  sequences   of nonnegative real  numbers, $\{\lambda_k\}$,   $\{\epsilon_k\}$ and $\{\sigma_k\}$ satisfying \eqref{eq:es}, the {\it  inexact proximal point method  for solving  \eqref{eq:nlp} } is introduced as follows: \\

\vspace{.2 cm}

\hrule
\begin{algorithm} \label{Alg:ProxEPNP}
 {\bf  Inexact proximal point method for nonlinear optimization problem}  \\
\hrule
\begin{description}
\item[0.] Take $\{\lambda_k\}$,   $\{\epsilon_k\}$ and $\{\sigma_k\}$ satisfying \eqref{eq:es},    ${\tilde p}^0 \in  {\tilde \Omega}$ and $\sigma>1$. Set $k=1$.
\item [1.] Given ${\tilde p}^{k} \in {\tilde \Omega}$, compute   ${\tilde p}^{k+1} \in {C}$  and  $e^k \in T_{{\tilde p}^k}M$ such that
\begin{equation} \label{eqn0np}
 {\tilde e}^{k+1} \in X^{\epsilon}({\tilde p}^{k+1}) + N_{\tilde \Omega}({\tilde p}^{k+1}) -\lambda_k  {\widetilde{\rm exp}}_{{\tilde p}^{k+1}}^{-1}{\tilde p}^{k},
 \end{equation} 
\begin{equation} \label{eqn05np}
\|{\tilde e}^{k+1}\| \leq \sigma_ {k}d({\tilde p}^{k}, {\tilde p}^{k+1}).
\end{equation}
\item[ 2.] If  ${\tilde p}^{k-1}={\tilde p}^k$ or ${\tilde p}^k\in  $, then {\bf stop}; otherwise,   set $k\gets k+1$, and go to step~{\bf  1}.
\end{description}
\hrule
\end{algorithm}
\vspace{.2 cm}

First note that  $X$ defined in \eqref{eq:vfala}  satisfies {\bf A1} and  {\bf A2}.  Moreover, under suitable constraint qualifications $X$  also satisfies {\bf A2}, see \cite[Theorem 11]{BergmannHerzog2019}. Therefore,  we can apply Theorem~\ref{conv.alg.ii} to obtain the following result. 
\begin{theorem} \label{conv.alg.}
The sequence $\{p^k\}$ generated by  \eqref{eqn0np} and \eqref{eqn05np}   is well defined and  converges to a point   $p^*\in S(f,\,\Omega)$.
\end{theorem}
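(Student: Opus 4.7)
The plan is to present the statement as a direct corollary of Theorem~\ref{conv.alg.ii}, by viewing the scheme \eqref{eqn0np}-\eqref{eqn05np} as an instance of Algorithm~\ref{Alg:InexProx} on the product manifold $\widetilde{M}=M\times\mathbb{R}^m\times\mathbb{R}^\ell$ with the vector field $X$ given in \eqref{eq:vfala} and the convex set $\tilde\Omega=M\times\mathbb{R}_+^m\times\mathbb{R}^\ell$.

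First I would record the geometric setup: since $M$ is Hadamard and $\mathbb{R}^m$, $\mathbb{R}^\ell$ are Hadamard in the Euclidean metric, the product $\widetilde{M}$ with the induced product metric is Hadamard finite dimensional, and its exponential map and parallel transport decompose coordinatewise as already written in the paper. In particular, the fundamental inequality \eqref{eq:coslaw} holds on $\widetilde{M}$, so the entire machinery developed in Sections~\ref{sec:prelim} and \ref{Sec:InProxVIP} is available for $\widetilde{M}$.

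Next I would verify that the triple $(\widetilde{M},X,\tilde\Omega)$ satisfies assumptions \textbf{A1}-\textbf{A3}. For \textbf{A1}, since $f,g,h$ are continuously differentiable on $M$, formula \eqref{eq:vfala} defines $X(\tilde p)$ for every $\tilde p\in\widetilde{M}$, so $\mathrm{dom}\,X=\widetilde{M}$; the set $\tilde\Omega$ is closed and convex as a product of closed convex sets. For \textbf{A2}, the monotonicity of $X$ follows from the convexity of $f$ and of each $g_i$, together with the linearity in $(\mu,\lambda)$ of the last two blocks: a short coordinatewise computation using the definition of monotonicity on $\widetilde{M}$ reduces this to the Riemannian monotonicity of $\mathrm{grad}_p\mathcal{L}(\cdot,\mu,\lambda)$ plus the cancellation of the cross terms coming from the $-g$ and $h$ components. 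Maximality follows from $\mathrm{dom}\,X=\widetilde{M}$ and Theorem~\ref{mmsub} applied to the Lagrangian, invoking \cite[Theorem~11]{BergmannHerzog2019} as cited in the paper. For \textbf{A3}, the nonemptiness of $S(X,\tilde\Omega)$ is exactly the existence of a KKT triple, granted under the assumed constraint qualification.

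With these three properties in hand, the iteration \eqref{eqn0np}-\eqref{eqn05np} coincides word-for-word with Algorithm~\ref{Alg:InexProx} applied to $X$ on $(\widetilde{M},\tilde\Omega)$: the enlargement $X^{\epsilon_k}$ used in \eqref{eqn0np} is the one defined in \eqref{enl.X} relative to the product parallel transport, and \eqref{eqn05np} is the relative error criterion \eqref{eq;ec} with distance on $\widetilde{M}$. Applying Theorem~\ref{th:wdef} yields well-definedness of $\{\tilde p^k\}$, and Theorem~\ref{conv.alg.ii} gives convergence to some $\tilde p^{\,*}\in S(X,\tilde\Omega)$. Finally, by the equivalence established in \eqref{eq:VIPnlp} between $0\in X(\tilde p)+N_{\tilde\Omega}(\tilde p)$ and the KKT system \eqref{eq:kkk1}-\eqref{eq:kkk4}, the first component $p^*$ of $\tilde p^{\,*}$ is a solution of \eqref{eq:nlp}. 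The main obstacle I anticipate is the careful verification of maximal monotonicity of the saddle-point operator $X$ in the Riemannian product setting, since the mixed manifold/Euclidean structure requires one to handle the parallel transport on the $M$-component while the $(\mu,\lambda)$-components are translated; this step is where one genuinely leans on the Bergmann-Herzog KKT theory rather than on Theorem~\ref{mmsub} alone.
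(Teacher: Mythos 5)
Your proposal follows essentially the same route as the paper: verify that the saddle-point field $X$ of \eqref{eq:vfala} and the set ${\tilde \Omega}$ satisfy \textbf{A1}--\textbf{A3} on the product Hadamard manifold ${\widetilde M}$ (with \textbf{A3} secured via the constraint qualification and \cite[Theorem~11]{BergmannHerzog2019}), and then invoke Theorems~\ref{th:wdef} and~\ref{conv.alg.ii}. In fact your write-up supplies more detail (the coordinatewise monotonicity check and the product-manifold geometry) than the paper's own one-line justification, so it is a faithful and somewhat fuller version of the intended argument.
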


\section{Conclusions} \label{secfr}
In this paper we combine the ideas in \cite{rocka01}  and \cite{BurachikIusemSvaiter1997} to introduce an  inexact  proximal point method for solving variational inequality  problems  on Hadamard manifolds.   As a proposal of future work it would be interesting to study local version  of our results on arbitrary Riemannian manifolds. Note that for this purpose, a local version of the formula \eqref{eq:coslaw}  will be required.



\def\cprime{$'$} \def\cprime{$'$}

\end{document}